\documentclass[12pt,reqno]{amsart}
\usepackage{amsmath, amssymb, amsthm, amsfonts} 
\usepackage{url}
\usepackage{mathtools}
\usepackage[breaklinks]{hyperref}
\usepackage{tikz}
\usetikzlibrary{decorations.markings}
\usepackage{array}
\newcommand{\Arg}{\operatorname{Arg}}

\renewcommand{\Re}{\operatorname{Re}}

\renewcommand{\(}{\left\(}
\renewcommand{\)}{\right\)}
\renewcommand{\[}{\left\[}
\renewcommand{\]}{\right\]}

\numberwithin{equation}{section}
 \theoremstyle{plain}
\newtheorem{theorem}{Theorem}[section]
\newtheorem{lemma}[theorem]{Lemma}
\newtheorem{remark}[]{Remark}

\newtheorem{corollary}[theorem]{Corollary}

   \makeatletter
\def\proof{\@ifnextchar[{\@oproof}{\@nproof}}
\def\@oproof[#1][#2]{\trivlist\item[\hskip\labelsep\textit{#2 Proof of\
#1.}~]\ignorespaces}
\def\@nproof{\trivlist\item[\hskip\labelsep\textit{Proof.}~]\ignorespaces}

\makeatother

\begin{document}
\title[An Extension of Ramanujan-Guinand Identity]{An Extension of Ramanujan-Guinand Identity for the Dedekind zeta function and a new formula for $\zeta_\mathbb{F}(2)$ and $\zeta_\mathbb{F}(2m+1)$}

\author{Diksha Rani Bansal}
\address{Diksha Rani\\ Department of Mathematics \\
Indian Institute of Technology Indore \\
Indore, Simrol, Madhya Pradesh 453552, India.} 
\email{dikshaba1233@gmail.com,  mscphd2207141001@iiti.ac.in}

\author{Bibekananda Maji}
\address{Bibekananda Maji\\ Department of Mathematics \\
Indian Institute of Technology Indore \\
Indore, Simrol, Madhya Pradesh 453552, India.} 
\email{bibek10iitb@gmail.com, bmaji@iiti.ac.in}

\thanks{2020 \textit{Mathematics Subject Classification.} Primary 11M06,  11R42; Secondary 33E20.\\
\textit{Keywords and phrases.} Riemann zeta function,  Ramanujan-Guinand identity,  Dedekind zeta function, Ramanujan-Koshliakov formula, Special values.}

\begin{abstract}
On page 253 of his Lost Notebook, Ramanujan recorded an intriguing identity relating a generalized divisor function and a modified Bessel function, which was later rediscovered by Guinand and is now known as the Ramanujan–Guinand identity. In this paper, we establish a number field analogue of this identity for the Dedekind zeta function. In particular, we recover Ramanujan-Guinand identity, Ramanujan-Koshliakov identity, and also obtain the transformation formula for the logarithm of the Dedekind eta function.
In 1986, Zagier obtained a formula for $\zeta_\mathbb{F}(2)$ for any number field $\mathbb{F}$. Quite surprisingly, as an application of our main theorem, we also obtain a new formula for $\zeta_\mathbb{F}(2)$ for arbitrary number field $\mathbb{F}$. Moreover, we derive an elegant identity for $\zeta_\mathbb{F}(2m+1)$ for \emph{any} positive integer $m$ and \emph{any} number field $\mathbb{F}$.

%
\end{abstract}

\maketitle

\section{{\bf Introduction}}
The Riemann zeta function, defined for complex numbers $s\in \mathbb{C}$ with $\Re(s)>1$, 
$
\zeta(s) = \sum_{n=1}^{\infty}\frac{1}{n^s},
$
is a fundamental object in analytic number theory. It admits a meromorphic continuation to the entire complex plane, with a simple pole at $s=1$, and satisfies the following functional equation:
\begin{align}\label{zeta functional eqn}
\Omega(s) = \Omega(1-s),
\end{align}
where $\Omega(s) = \pi^{-\frac{s}{2}}\Gamma\left(\frac{s}{2}\right)\zeta(s)$.
The special values of the Riemann zeta function have long been a subject of deep interest. 
In particular, Euler obtained the following evaluation at the positive even integers,
\begin{align}\label{Euler's formula for even zeta values}
\zeta(2m) = (-1)^{m+1}\frac{(2\pi)^{2m}B_{2m}}{2(2m)!}, \quad m \in \mathbb{N},
\end{align}
where $B_{2m}$ denotes the $2m$th Bernoulli number.
In contrast, no such simple closed formula is known for the odd zeta values. In this context,
Ramanujan discovered a remarkable identity relating $\zeta(2m+1)$ to rapidly convergent exponential series that are interchanged under the transformation $x \mapsto \frac{1}{x}$, stated for any complex $x$ with $\Re(x)>0$, and $k \in \mathbb{Z} \setminus \{0\}$, as
\begin{align}\label{Ramanujan id}
H_{2k+1}(x) + (-1)^{k+1} H_{2k+1}\left(\frac{1}{x}\right) = \pi^{k+1}\sum^{k+1}_{j=0}(-1)^{j-1}\frac{B_{2j}}{(2j)!}\frac{B_{2k + 2 - 2j}}{(2k + 2 - 2j)!}x^{k+1-2j},
\end{align}
where $$H_{2k+1}(x) = (4\pi x)^{-k} \left( \frac{1}{2}\zeta(2k+1) + \sum_{n=1}^\infty \sigma_{-(2k+1)}(n)e^{-2n\pi x} \right),$$ 
with the generalized divisor function $\sigma_{z}(n) = \sum_{d|n}d^{z},
z \in \mathbb{C}$. The above identity was also extended to number fields independently by the current authors \cite{BM} and Banerjee, Gupta and Kumar \cite{BGK23}. Interestingly, the identity \eqref{Ramanujan id} is equivalent to the transformation formula for the Eisenstein series over the full modular group $\mathrm{SL}_2(\mathbb{Z})$. Ramanujan discovered several other identities that connect zeta values with the generalized divisor function $\sigma_{z}(n)$. A notable example is a transformation formula on page 253 of his Lost Notebook \cite{Ramanujan}, that connects generalized divisor function with modified Bessel function. 
It was later rediscovered by Guinand \cite{Guinand} in 1955, and is now commonly known as the Ramanujan–Guinand identity. For $z \in \mathbb{C}$, it is given as
\begin{align}\label{RG id}
\mathcal{I}_{z}(x) &= \mathcal{I}_{-z}\left(\frac{1}{x}\right), \quad \text{for} \,\, \Re(x) > 0,
\end{align}
where
\begin{align*}
\mathcal{I}_{z}(x) &= 4\sqrt{x}\sum_{n=1}^{\infty}\sigma_{-z}(n)n^{\frac{z}{2}}K_{\frac{z}{2}}(2n\pi x) + \Omega(z)\left(x^{\frac{1-z}{2}} - x^{\frac{z-1}{2}}\right).
\end{align*}
Here, $\Omega(z)$ is defined in \eqref{zeta functional eqn} and $K_{\nu}(x)$ is the modified Bessel function of second kind. Various proofs of this formula appear in the literature. Berndt, Lee, and Sohn \cite{BLS} established \eqref{RG id} using a theorem of Watson \cite{Watson}, which essentially elaborates on Guinand’s original proof \cite{Guinand}. Dixit \cite{Dixit} later provided a proof of the Ramanujan–Guinand identity \eqref{RG id} via Mellin transform techniques.
The divisor function appearing in the Ramanujan–Guinand identity \eqref{RG id} also arises naturally in the Fourier expansion of the non-holomorphic Eisenstein series. 
We briefly recall this connection below.
For $\tau=x+iy$ with $y>0$, the non-holomorphic Eisenstein series $G(\tau,s)$ is defined by
\begin{align*}
G(\tau,s):= \frac{1}{2}\sum_{\substack{(m,n)\in\mathbb{Z}^2\\(m,n)\neq(0,0)}} \frac{y^{s}}{|m\tau+n|^{2s}}, \quad \Re(s)>1.
\end{align*}
This function is $\mathrm{SL}_2(\mathbb{Z})$-invariant and hence a non-holomorphic modular function of weight zero. It admits a meromorphic continuation to $\mathbb{C}$ with a simple pole at $s=1$ with residue $\pi/2$, given by the Fourier expansion
\begin{align}
G(\tau,s)
&= \zeta(2s)\,y^{s}
+ \frac{\sqrt{\pi}\,\Gamma\!\left(s-\tfrac12\right)}{\Gamma(s)}\zeta(2s-1)\,y^{1-s} \notag \\
&\quad + \frac{4\sqrt{y}\,\pi^{s}}{\Gamma(s)}
\sum_{n=1}^{\infty}\frac{\sigma_{2s-1}(n)}{n^{\,s-\frac12}}
K_{s-\frac12}(2\pi n y)\cos(2\pi n x). \label{Fourier series}
\end{align}
It is well known that \eqref{RG id} follows from \eqref{Fourier series} together with the modular invariance
\begin{align*}
G(\tau,s)=G\!\left(-\frac{1}{\tau},s\right), \qquad \tau\in\mathbb{H}.
\end{align*}
The identity \eqref{RG id} has been generalized and extended in several directions. Berndt, Dixit, and Sohn \cite{BDS} developed character analogues of the Ramanujan-Guinand identities. Cohen \cite{Cohen} presented a further generalization of \eqref{RG id}, for which Banerjee and Khyati \cite{BK24} later established character analogues. Banerjee and the second author \cite{BM23} investigated Cohen-type identities associated with generalized divisor functions. Dixit, Kesarwani, and Moll \cite{DKM18} derived another generalization while extending transformation formulas using generalized modified Bessel functions of the second kind. Subsequently, Kumar \cite{Kumar} generalized a theorem of Watson and, as an application, provided a new proof of the result of Dixit et al. \cite{DKM18}. A detailed historical discussion on the Ramanujan-Guinand formula \eqref{RG id} can be found in \cite{BLS}. In earlier developments, the case $z=0$ of \eqref{RG id}  was rediscovered by Koshliakov \cite{Koshliakov}, several years after Ramanujan’s original entry \cite[p.~253]{Ramanujan}. This special case, now known as the Ramanujan-Koshliakov identity, is equivalent to the functional equation for $\zeta^2(s)$ and is given by
\begin{align}\label{Koshliakov Formula}
\mathcal{I}_{0}(x) &= \mathcal{I}_{0}\left(\frac{1}{x}\right), \quad \text{for} \,\, \Re(x) > 0,
\end{align}
with
\begin{align*}
\mathcal{I}_{0}(x) = \sqrt{x}\left[\gamma - \log\left(\frac{4\pi}{x}\right)\right] + 4\sqrt{x}\sum_{n=1}^{\infty}d(n)\text{K}_0(2n\pi x),
\end{align*}
where $\gamma$ is the Euler-Mascheroni constant.
Thus the Ramanujan–Guinand identity \eqref{RG id} generalizes the above result in a natural and elegant way. The identity \eqref{Koshliakov Formula} was also proved by Ferrar \cite{Ferrar1936}. In his paper, Ferrar obtained solutions to the functional equation $F(x) = F\left(\frac{1}{x}\right)$. More recently, the present authors \cite{BM2} have derived solutions of this equation in the setting of number fields, namely for the functional equation of the higher powers of the Dedekind zeta function. The Dedekind zeta function is given by the Dirichlet series, valid for $\Re(s) > 1$,
\begin{align*}
\zeta_{\mathbb{F}}(s) = \sum_{n=1}^{\infty}\frac{\mathtt{a}_{\mathbb{F}}(n)}{n^s},
\end{align*}
where $\mathtt{a}_{\mathbb{F}}(n)$ is the number of ideals in the ring of integers $\mathcal{O}_{\mathbb{F}}$, with norm $n$. Throughout the paper, unless specified otherwise, let $\mathbb{F}$ be a number field of degree $d = r_1 + 2r_2$, with $r_1$ real embeddings and $r_2$ pairs of complex conjugate embeddings, and let $D$ denote its absolute discriminant.
The functional equation for the Dedekind zeta function is given by 
\begin{align}\label{Dedekind_zeta_functional_equation}
\Omega_{\mathbb{F}}(s) = \Omega_{\mathbb{F}}(1-s),
\end{align}
where 
\begin{align}\label{Omega_F of fnal eqn}
\Omega_{\mathbb{F}}(s) &= \left(\frac{D}{4^{r_2}\pi^d}\right)^{\frac{s}{2}}\Gamma^{r_1}\left(\frac{s}{2}\right)\Gamma^{r_2}(s)\zeta_{\mathbb{F}}(s).
\end{align}
We define the leading coefficient of the Laurent expansion of $\zeta_{\mathbb{F}}(s)$ at $s=0$ by
\begin{align}\label{Laurent series_at s=0_1st coeff}
C_{\mathbb{F}} := \lim_{s\rightarrow 0} \frac{\zeta_{\mathbb{F}}(s)}{s^{r_1+r_2-1}} = - \frac{R_\mathbb{F} h_\mathbb{F}}{w_\mathbb{F}},
\end{align}
where $h_\mathbb{F}$ is the class number of $\mathbb{F}$, $R_\mathbb{F}$ its regulator, and $w_\mathbb{F}$ the number of roots of unity in $\mathbb{F}$. It is also associated to the residue of the Dedekind zeta function at the pole $s=1$, given by
\begin{align}\label{Laurent series_at s=1_1st coeff}
\lim_{s\to1}(s-1)\zeta_{\mathbb{F}}(s) = \frac{2^{r_1}(2\pi)^{r_2}}{\sqrt{D}}\frac{R_\mathbb{F} h_\mathbb{F}}{w_\mathbb{F}} =: H_{\mathbb{F}}.
\end{align}
Note that $H_{\mathbb{F}}$ can be written as $\sqrt{D}H_{\mathbb{F}} = -2^{r_1}(2\pi)^{r_2}C_{\mathbb{F}}$.
A number field analogue of Euler’s identity \eqref{Euler's formula for even zeta values}
was established by Klingen \cite{Klingen} and Siegel \cite{Siegel}. They proved that for any totally real number field $\mathbb{F}$ of degree $d$,
\begin{align*}
\zeta_{\mathbb{F}}(2m) = \frac{q_m \pi^{2dm}}{\sqrt{D}}, \quad m \in \mathbb{N},
\end{align*}
where $q_m$ is a non-zero rational number.
Siegel also derived an explicit formula for $\zeta_{\mathbb{F}}(-1)$ for real quadratic fields, namely,
\begin{align}\label{Zeta(-1)}
\zeta_\mathbb{F}(-1) = \frac{1}{60}\sum_{\substack{x\in \mathbb{Z} \\ x^2 < D \\ x^2 \equiv D(4)}} \sigma_1\left(\frac{D-x^2}{4}\right),
\end{align}
which is directly related to $\zeta_{\mathbb{F}}(2)$ through the functional equation; for instance, $\zeta_{\mathbb{Q}(\sqrt{2})}(2) = \frac{\pi^4}{48\sqrt{2}}$.  A more precise evaluation is given in \cite{BCH21} for any real quadratic field $\mathbb{F}$:
\begin{align*}
\zeta_\mathbb{F}(2m) = \frac{\tau(\chi_D)\,(2\pi)^{4m}\, B_{2m}\, B_{2m,\chi_D}}
{4\,((2m)!)^{2}\, D^{2m}},
\end{align*}
where $\tau(\chi_D)$ is the Gauss sum attached to the quadratic character $\chi_D$, and $B_{2m,\chi_D}$ denotes the generalized Bernoulli numbers. 
In 1986, Zagier \cite{Zagier} in his seminal paper  obtained a formula for $\zeta_\mathbb{F}(2)$ for arbitrary number fields. Defining
\begin{align*}
A(x) = \int\limits_0^x \frac{1}{1+t^2}\log\frac{4}{1+t^2} \text{d}t,
\end{align*}
he showed that
\begin{align*}
\zeta_\mathbb{F}(2) = \frac{\pi^{2r_1+2r_2}}{\sqrt{D}}\sum_\nu c_\nu A(x_{\nu, 1})\cdots A(x_{\nu, r_2}),
\end{align*}
where the coefficients $c_\nu$ are rational numbers and the $x_{\nu,j}$ are real algebraic numbers. 

We often see that the transformation formulas for automorphic forms encode special values of zeta and $L$-functions. For example, transformation formula for Eisenstein series over full modular group is equivalent to Ramanujan's formula \eqref{Ramanujan id}  for odd zeta values. Likewise, the transformation formula for Hilbert modular forms leads to Siegel's evaluation of the special values $\zeta_\mathbb{F}(2m)$ over totally real number fields.
  The present work provides another instance of this philosophy. Starting from a number field analogue of the Ramanujan--Guinand identity which itself arises from the Fourier expansion of a non-holomorphic Eisenstein series and an associated transformation formula, we obtain a formula evaluation of $\zeta_\mathbb{F}(2)$ and $\zeta_\mathbb{F}(2m+1), m \in \mathbb{N}$, for an arbitrary number field. 
  
We next define a generalized divisor function associated with a number field $\mathbb{F}$, 
\begin{align}\label{Gen sigma fn}
\sigma_{\mathbb{F}, z}(n) := \sum_{d|n}\mathtt{a}_{\mathbb{F}}(d)\mathtt{a}_{\mathbb{F}}\left(\frac{n}{d}\right)d^z = n^{z}\sigma_{\mathbb{F}, -z}(n).
\end{align}
A trivial bound for $\sigma_{\mathbb{F},z}(n)$ is given by
\begin{align}\label{sigma bound}
\sigma_{\mathbb{F},z}(n) \ll n^{\Re(z)+\varepsilon},
\end{align}
for any $\varepsilon>0$. This follows from the estimate
$\mathtt{a}_{\mathbb{F}}(n) \ll n^{\varepsilon}$ due to
Chandrasekharan and Narasimhan \cite[Lemma~9]{CN1963}.
It can be verified that the Dirichlet series associated with $\sigma_{\mathbb{F},z}(n)$ is given by
\begin{align}\label{Dirichlet series for generalized divisor function}
\sum_{n=1}^{\infty}\frac{\sigma_{\mathbb{F}, z}(n)}{n^s} = \zeta_{\mathbb{F}}(s)\zeta_{\mathbb{F}}(s-z), \quad \Re(s) > \max\{1, 1+\Re(z)\}.
\end{align}
Prior to presenting the main result, we introduce the Steen function $V(x\mid a_1,a_2,\dots,a_n)$, defined as
\begin{align}\label{Steen fn}
V(x| a_1, a_2, \ldots, a_n) := \frac{1}{2\pi i}\int\limits_{(c)}\prod_{j=1}^{n}\Gamma(s + a_j)x^{-s}\text{d}s, \quad |\Arg(x)| < \frac{\pi n}{2}, 
\end{align}
where $(c)$ denotes the vertical line from $c - i\infty$ to $c + i\infty$ and $\Arg(x)$ is the principal argument $-\pi < \Arg(x) \leq \pi$. It is assumed that all poles of $\Gamma(s+a_j)$ lie on the left side of $(c)$.
This function is a special case of the Meijer $G$-function, \cite[p.~415, Def.~16.17]{OLBC2010}, given by
\begin{align*}
G^{n, 0}_{0, n}\left(x \left| \begin{matrix} \text{---} \\ a_1, \ldots, a_n \end{matrix} \right.\right) &= V(x| a_1, a_2, \ldots, a_n),
\end{align*} 
with various argument choices yielding some popular special functions. For example,
\begin{align} 
V(x| 0) &= e^{-x}, \,\,\, \text{if} \,\,\, c > 0 \,\,\, \text{and} \,\, \Re(x) > 0, \nonumber \\ V(x| a, b) &= 2x^{\frac{1}{2}(a+b)}K_{a-b}(2x^{\frac{1}{2}}), \,\,\, \text{if} \,\,\, c > \max\{-\Re(a), -\Re(b) \} \,\,\, \text{and} \,\, |\Arg(x)| < \pi. \label{Bessel fn formula} 
\end{align}
Further details on the Steen function can be found in \cite[p.~63]{KT} and \cite{Steen}.
To simplify notation, we shall use the following abbreviation throughout the paper:
\begin{align}\label{Short notation}
V_{\mathbb{F},z}(x) := V\left(x\bigg|\left(0\right)_{r_1 + r_2}, \left(\frac{z}{2}\right)_{r_1 + r_2}, \left(\frac{1}{2}\right)_{r_2}, \left(\frac{1+z}{2}\right)_{r_2}\right),
\end{align}
where $(a)_m$ is the $m$-tuple whose all entries are $a$, which is nothing but the following Meijer $G$-function:
\begin{align*}
V_{\mathbb{F},z}(x) = G^{2d, 0}_{0, 2d}\left(x \left| \begin{matrix} \text{---} \\ \left(0\right)_{r_1 + r_2}, \left(\frac{z}{2}\right)_{r_1 + r_2}, \left(\frac{1}{2}\right)_{r_2}, \left(\frac{1+z}{2}\right)_{r_2} \end{matrix} \right.\right).
\end{align*} 
This integral converges for $|\Arg(x)| < \pi d$, \cite[p.~415, Def.~16.17, Prop.~1]{OLBC2010}. We are now in a position to state the main results, which will be given in the next section. 

\section{{\bf Key Results}}
\begin{theorem}\label{Main thm 1}
Let $\mathbb{F}$ be any number field with degree $d$ and $z \in \mathbb{C}$ be any complex number. Let $C_{\mathbb{F}}$, $\Omega_{\mathbb{F}}(s), \sigma_{\mathbb{F}, z}(n)$ and $V_{\mathbb{F},z}\left(\left(\frac{\pi^{d} n x}{D}\right)^2\right)$ be defined as in \eqref{Laurent series_at s=0_1st coeff}, \eqref{Omega_F of fnal eqn}, \eqref{Gen sigma fn} and \eqref{Short notation}, respectively. Then, we have
\begin{align}\label{main eqn}
\mathcal{I}_{\mathbb{F}, z}(x) = \mathcal{I}_{\mathbb{F}, -z}\left(\frac{1}{x}\right), \quad \mathrm{for} \,\, x \neq 0,  |\Arg(x)| < \frac{\pi d}{2},
\end{align}
where $\mathcal{I}_{\mathbb{F}, z}(x)$ is defined as
\begin{align}
\mathcal{I}_{\mathbb{F}, z}(x) &= \left(\frac{\pi^d x}{D}\right)^{-\frac{z}{2}}2\sqrt{x}\sum_{n=1}^{\infty}\sigma_{\mathbb{F}, -z}(n)V_{\mathbb{F},z}\left(\left(\frac{\pi^{d} n x}{D}\right)^2\right) \notag\\
&+ 2^d\pi^{r_2}C_{\mathbb{F}}\Omega_{\mathbb{F}}(z)\left[x^{\frac{z-1}{2}} - x^{\frac{1-z}{2}}\right].\label{Defn of I_F}
\end{align}
\end{theorem}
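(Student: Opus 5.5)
The plan is to prove \eqref{main eqn} by the Mellin transform method, in the spirit of Dixit's proof of \eqref{RG id}. The whole series $\sum_{n}\sigma_{\mathbb{F},-z}(n)V_{\mathbb{F},z}\bigl((\pi^d n x/D)^2\bigr)$ will be written as a single line integral whose integrand is the symmetric object $\Omega_{\mathbb{F}}(w)\Omega_{\mathbb{F}}(w+z)$. The identity should then follow from the functional equation \eqref{Dedekind_zeta_functional_equation} together with a residue computation.

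\emph{Step 1: reduce to an integral of $\Omega_{\mathbb{F}}(w)\Omega_{\mathbb{F}}(w+z)$.} By \eqref{Steen fn} and \eqref{Short notation},
\begin{align*}
V_{\mathbb{F},z}(y^2)=\frac{1}{2\pi i}\int\limits_{(c)}\Gamma^{r_1+r_2}(s)\,\Gamma^{r_1+r_2}\!\left(s+\tfrac z2\right)\Gamma^{r_2}\!\left(s+\tfrac12\right)\Gamma^{r_2}\!\left(s+\tfrac{1+z}{2}\right)y^{-2s}\,\mathrm{d}s .
\end{align*}
Setting $w=2s$ and using the duplication formula $\Gamma(s)\Gamma(s+\frac12)=2^{1-2s}\sqrt{\pi}\,\Gamma(2s)$ on the $r_2$ pairs, the gamma product becomes
\begin{align*}
\Gamma^{r_1}\!\left(\tfrac w2\right)\Gamma^{r_2}(w)\,\Gamma^{r_1}\!\left(\tfrac{w+z}{2}\right)\Gamma^{r_2}(w+z)
\end{align*}
times an explicit factor of the form $c\cdot 4^{-r_2 w}\cdot(\text{power of } 2 \text{ depending on } z)$. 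Take $c$ large, i.e.\ $\Re(w)>\max\{1,1-\Re z\}$. Then interchange sum and integral, which is justified by \eqref{sigma bound} and absolute convergence. Applying \eqref{Dirichlet series for generalized divisor function} with $z$ replaced by $-z$ produces $\zeta_{\mathbb{F}}(w)\zeta_{\mathbb{F}}(w+z)$. Collecting the powers of $D$, $\pi$ and $4$, the left-hand series becomes
\begin{align*}
\kappa\,\frac{1}{2\pi i}\int\limits_{(c')}\Omega_{\mathbb{F}}(w)\,\Omega_{\mathbb{F}}(w+z)\,x^{-w}\,\mathrm{d}w ,
\end{align*}
where $\kappa$ is an explicit constant times a power of $x$. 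The prefactor $(\pi^d x/D)^{-z/2}2\sqrt{x}$ in \eqref{Defn of I_F} is designed so that $\kappa$ reduces to a pure power $x^{\alpha}$ with $\alpha$ symmetric under $(z,x)\mapsto(-z,1/x)$. This bookkeeping has to be checked carefully.

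\emph{Step 2: shift the contour.} Move the line to $\Re(w)=c''<\min\{0,-\Re z\}$. The function $\Omega_{\mathbb{F}}(s)$ is meromorphic with only simple poles at $s=0$ and $s=1$: the poles of $\Gamma^{r_1}(s/2)\Gamma^{r_2}(s)$ at $s=0$ are cancelled down to order one by the zero of order $r_1+r_2-1$ of $\zeta_{\mathbb{F}}$, and at negative integers they are cancelled completely by the trivial zeros. The residue of $\Omega_{\mathbb{F}}$ at $0$ is $2^{r_1}C_{\mathbb{F}}$, by \eqref{Laurent series_at s=0_1st coeff}. By the functional equation, the residue at $1$ is $-2^{r_1}C_{\mathbb{F}}$. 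Hence the integrand has simple poles at $w=1,\,1-z,\,0,\,-z$, with residues of the form $\pm 2^{r_1}C_{\mathbb{F}}\,\Omega_{\mathbb{F}}(\pm z)\,x^{-w}$. Using $\Omega_{\mathbb{F}}(-z)=\Omega_{\mathbb{F}}(1+z)$ and $\Omega_{\mathbb{F}}(1-z)=\Omega_{\mathbb{F}}(z)$, all four residues are multiples of $C_{\mathbb{F}}\Omega_{\mathbb{F}}(z)$ or $C_{\mathbb{F}}\Omega_{\mathbb{F}}(-z)$. After multiplying by $x^{\alpha}$, they should exactly produce the term $2^d\pi^{r_2}C_{\mathbb{F}}\Omega_{\mathbb{F}}(z)[x^{\frac{z-1}{2}}-x^{\frac{1-z}{2}}]$ on the left and its counterpart $2^d\pi^{r_2}C_{\mathbb{F}}\Omega_{\mathbb{F}}(-z)[x^{\frac{1+z}{2}}-x^{-\frac{1+z}{2}}]$ from $\mathcal{I}_{\mathbb{F},-z}(1/x)$ on the right. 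The factor $2^{r_2}\pi^{r_2}$ comes from the duplication constants.

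\emph{Step 3: apply the functional equation.} On the shifted line, substitute $w\mapsto 1-z-w$. The functional equation gives
\begin{align*}
\Omega_{\mathbb{F}}(1-z-w)\,\Omega_{\mathbb{F}}(1-w)=\Omega_{\mathbb{F}}(w+z)\,\Omega_{\mathbb{F}}(w),
\end{align*}
so the integrand is invariant apart from the power $x^{-w}\mapsto x^{-(1-z-w)}$. This maps the left-shifted integral to the Step 1 integral with $(z,x)$ replaced by $(-z,1/x)$ (on a right line). That integral is precisely the series part of $\mathcal{I}_{\mathbb{F},-z}(1/x)$, and this proves \eqref{main eqn}.

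\emph{Main obstacles.} The hard analytic point is justifying the contour shift for complex $x$ in the full range $|\Arg(x)|<\pi d/2$. By Stirling's formula, $\Omega_{\mathbb{F}}(w)\Omega_{\mathbb{F}}(w+z)$ decays like $e^{-\pi d|\Im w|/2}$ times a polynomial in $|\Im w|$; the $\zeta_{\mathbb{F}}$ factors contribute only polynomial growth in vertical strips, by Phragm\'en--Lindel\"of. On the other hand, $|x^{-w}|$ grows like $e^{|\Arg x||\Im w|}$. So the horizontal segments vanish exactly when $|\Arg x|<\pi d/2$. The second delicate point is the degenerate values $z\in\{0,\pm1\}$, where the four poles collide into double poles. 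I would first prove the identity for $z$ off these values and then extend by analytic continuation in $z$, since both sides are analytic in $z$ with locally uniform convergence of the series. The Ramanujan–Koshliakov case then arises as the limit $z\to0$.
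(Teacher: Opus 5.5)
Your proposal is correct and is essentially the paper's proof: it uses the same steps in the same order, namely the duplication formula to write the series as $\frac{1}{2\pi i}\int_{(c)}\Omega_{\mathbb{F}}(w)\Omega_{\mathbb{F}}(w+z)x^{-w}\,\mathrm{d}w$, a left shift picking up the simple poles at $0,1,-z,1-z$, the functional equation on the shifted line, and continuation in $z$ for the degenerate values (with $z=0$ read as a limit, since both sides have a pole in $z$ there). The one slip is cosmetic: your substitution $w\mapsto 1-z-w$ lands exactly on the series part of $\mathcal{I}_{\mathbb{F},z}(1/x)$, not of $\mathcal{I}_{\mathbb{F},-z}(1/x)$; this is harmless because the two series parts coincide by Lemma~\ref{Lemma reln bw V_z and V_-z} and \eqref{Gen sigma fn}, or you can use the paper's substitution $w\mapsto 1-w$, which gives $\mathcal{I}_{\mathbb{F},-z}(1/x)$ directly.
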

\begin{remark}
We emphasize that the identity \eqref{main eqn} is established using the
functional equation of $\zeta_{\mathbb{F}}(s)\zeta_{\mathbb{F}}(s+z)$.
Interestingly, the transformation formula \eqref{main eqn} itself implies the functional equation of $\zeta_{\mathbb{F}}(s)\zeta_{\mathbb{F}}(s+z)$ shown in the following theorem.
\end{remark}

\begin{theorem}\label{Converse of main thm}
For any $z \in \mathbb{C}$, the relation \eqref{main eqn} leads to the following functional equation of $\zeta_{\mathbb{F}}(s)\zeta_{\mathbb{F}}(s+z)$:
\begin{align*}
\Omega_{\mathbb{F}}(s)\Omega_{\mathbb{F}}(s+z) &= \Omega_{\mathbb{F}}(1-s)\Omega_{\mathbb{F}}(1-s-z)
\end{align*}
where $\Omega_{\mathbb{F}}(s)$ is given in \eqref{Omega_F of fnal eqn}.
\end{theorem}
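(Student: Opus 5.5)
The plan is to take the Mellin transform of both sides of \eqref{main eqn}, regarded as functions of $x>0$. By analytic continuation in $z$, it suffices first to treat $z$ in a convenient region, say $\Re(z)$ small or $z$ real with $0<z<1$, and $x>0$ real.

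\textbf{Step 1: Mellin transform of the series.} The definition \eqref{Steen fn}, read by Mellin inversion, gives
\begin{align*}
\int_0^\infty V_{\mathbb{F},z}(y)\,y^{w-1}\,dy=\Gamma^{r_1+r_2}(w)\,\Gamma^{r_1+r_2}\!\left(w+\tfrac z2\right)\Gamma^{r_2}\!\left(w+\tfrac12\right)\Gamma^{r_2}\!\left(w+\tfrac{1+z}2\right).
\end{align*}
Apply the duplication formula $\Gamma(w)\Gamma(w+\tfrac12)=2^{1-2w}\sqrt\pi\,\Gamma(2w)$ and substitute $y=(\pi^d n x/D)^2$. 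Summing over $n$ with \eqref{Dirichlet series for generalized divisor function} (justified by \eqref{sigma bound} and absolute convergence for $\Re(s)$ large), one finds that the Mellin transform of the series part of $\mathcal{I}_{\mathbb{F},z}(x)$, at a suitable variable $s$, equals a constant multiple of
\begin{align*}
\Omega_{\mathbb{F}}(s')\,\Omega_{\mathbb{F}}(s'+z)
\end{align*}
after a linear change $s\mapsto s'$. The constant absorbs the powers of $\pi$, $D$, $2$, and the factor $x^{-z/2}2\sqrt{x}$.

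\textbf{Step 2: Mellin transform of the right-hand side.} For $\mathcal{I}_{\mathbb{F},-z}(1/x)$, the substitution $x\mapsto 1/x$ turns the Mellin variable $s$ into $-s$. Thus the series part on the right has transform proportional to $\Omega_{\mathbb{F}}(1-s')\Omega_{\mathbb{F}}(1-s'-z)$, valid in a half-plane lying to the left rather than the right.

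\textbf{Step 3: handling the half-planes.} The two sides' Mellin transforms live in disjoint strips, so the comparison is done as follows. Write $f(x)=$ the series part of the left side. Identity \eqref{main eqn} expresses $f(x)$ as the series on the right, which decays rapidly as $x\to0$, plus the explicit terms $2^d\pi^{r_2}C_{\mathbb{F}}\Omega_{\mathbb{F}}(\pm z)(x^{\pm(z-1)/2}-x^{\pm(1-z)/2})$. Hence $f(x)$ has explicit power asymptotics as $x\to0$ and decays exponentially as $x\to\infty$, since $V$ decays like $\exp(-c\,y^{1/(2d)})$. Split $\int_0^\infty f(x)x^{s-1}\,dx$ at $x=1$. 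On $(0,1)$, replace $f$ by the right side of \eqref{main eqn}. The power terms integrate to rational functions of $s$ with simple poles at $s=\pm\frac{1-z}2$ and similar points. The remaining piece, together with $\int_1^\infty f$, is entire. This shows that $\Omega_{\mathbb{F}}(s')\Omega_{\mathbb{F}}(s'+z)$ continues meromorphically. Performing the same split for the right side's series and comparing the resulting expressions, the rational terms match those produced on the other side. The symmetry then yields $\Omega_{\mathbb{F}}(s)\Omega_{\mathbb{F}}(s+z)=\Omega_{\mathbb{F}}(1-s)\Omega_{\mathbb{F}}(1-s-z)$. Analytic continuation removes the restriction on $z$.

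\textbf{Main obstacle.} The hardest step is the bookkeeping in Step 3. One must check that the polar terms from $x^{\frac{z-1}{2}}-x^{\frac{1-z}{2}}$, with the coefficient $2^d\pi^{r_2}C_{\mathbb{F}}\Omega_{\mathbb{F}}(z)$, cancel correctly between the two sides, so that only the reflected product survives. This requires the residue of $\zeta_{\mathbb{F}}$ at $1$, expressed via $C_{\mathbb{F}}$ and $H_{\mathbb{F}}$. One must also fix the normalization constants coming from the duplication formula so that the completed $\Omega_{\mathbb{F}}$ appears exactly, with no stray factors.
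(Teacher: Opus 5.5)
Your proposal is correct and is essentially the paper's argument: Mellin-transform the series via Lemma~\ref{Lemma for Mellin trans} and \eqref{Dirichlet series for generalized divisor function}, split $\int_0^\infty$ at $x=1$, use \eqref{main eqn} on $(0,1)$, and read off the symmetry under $(s,z)\mapsto(1-s,-z)$. The paper transforms only the left-hand series and observes that the resulting expression is invariant, which is equivalent to your two-sided comparison. One correction to your ``main obstacle'': the residue of $\zeta_{\mathbb{F}}$ at $s=1$ plays no role. In the paper's normalization the explicit terms integrate to $\frac{\Omega_{\mathbb{F}}(-z)}{s+z}+\frac{\Omega_{\mathbb{F}}(-z)}{1-s}+\frac{\Omega_{\mathbb{F}}(z)}{1-s-z}+\frac{\Omega_{\mathbb{F}}(z)}{s}$, which is visibly invariant under $(s,z)\mapsto(1-s,-z)$. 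Moreover, the Mellin transform of each series part, prefactor included, is exactly $(4\pi)^{r_2}$ times the corresponding product of $\Omega_{\mathbb{F}}$'s, so no $z$-dependent stray factors arise.
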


We now recover the classical Ramanujan–Guinand identity as a special case of our main result.
\begin{corollary}\label{Guinand from main thm}
When $\mathbb{F}=\mathbb{Q}$, the identity \eqref{main eqn} in Theorem~\ref{Main thm 1}
reduces to the Ramanujan–Guinand identity \eqref{RG id}.
\end{corollary}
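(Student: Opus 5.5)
Specialize every ingredient of \eqref{Defn of I_F} to $\mathbb{F}=\mathbb{Q}$, where $d=1$, $r_1=1$, $r_2=0$ and $D=1$. Then $\mathtt{a}_{\mathbb{Q}}(n)=1$ for all $n$, so $\sigma_{\mathbb{Q},-z}(n)=\sigma_{-z}(n)$ by \eqref{Gen sigma fn}. Also $\Omega_{\mathbb{Q}}(z)=\pi^{-z/2}\Gamma(z/2)\zeta(z)=\Omega(z)$. Since $\zeta(0)=-\tfrac12$ and $r_1+r_2-1=0$, the constant in \eqref{Laurent series_at s=0_1st coeff} is $C_{\mathbb{Q}}=\zeta(0)=-\tfrac12$, consistent with $R_{\mathbb{Q}}=h_{\mathbb{Q}}=1$ and $w_{\mathbb{Q}}=2$. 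Hence
\[
2^d\pi^{r_2}C_{\mathbb{F}}\Omega_{\mathbb{F}}(z)\bigl[x^{\frac{z-1}{2}}-x^{\frac{1-z}{2}}\bigr]=\Omega(z)\bigl(x^{\frac{1-z}{2}}-x^{\frac{z-1}{2}}\bigr),
\]
which is exactly the non-series term of $\mathcal{I}_z(x)$ in \eqref{RG id}.

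For the series term, \eqref{Short notation} with $r_1+r_2=1$ and $r_2=0$ gives $V_{\mathbb{Q},z}(y)=V(y\mid 0,\tfrac z2)$. By \eqref{Bessel fn formula}, $V(y\mid 0,\tfrac z2)=2y^{z/4}K_{-z/2}(2y^{1/2})$. With $y=(\pi n x)^2$ and $\Re(x)>0$, so that $y^{1/2}=\pi n x$ and $y^{z/4}=(\pi n x)^{z/2}$ on the principal branch, this becomes
\[
V_{\mathbb{Q},z}\bigl((\pi n x)^2\bigr)=2(\pi n x)^{z/2}K_{z/2}(2\pi n x),
\]
using $K_{-\nu}=K_\nu$. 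Multiplying by the prefactor $(\pi x)^{-z/2}\,2\sqrt{x}$, the powers of $\pi x$ cancel and the series becomes $4\sqrt{x}\sum_{n\ge1}\sigma_{-z}(n)n^{z/2}K_{z/2}(2n\pi x)$. Therefore $\mathcal{I}_{\mathbb{Q},z}(x)=\mathcal{I}_z(x)$.

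Replacing $z$ by $-z$ and $x$ by $1/x$ in the same computation gives $\mathcal{I}_{\mathbb{Q},-z}(1/x)=\mathcal{I}_{-z}(1/x)$. Theorem~\ref{Main thm 1} then yields \eqref{RG id}; its hypothesis $|\Arg(x)|<\pi/2$ is exactly the condition $\Re(x)>0$.

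The only delicate points are branch bookkeeping and the range of validity of \eqref{Bessel fn formula}. The identity $(y)^{1/2}=\pi n x$ holds because $|\Arg(x)|<\pi/2$ gives $|\Arg(y)|<\pi$, which is the range required there. The contour condition $c>\max\{0,-\Re(z)/2\}$ in \eqref{Bessel fn formula} is automatically compatible with the line of integration used to define $V_{\mathbb{F},z}$. Beyond this, the proof is a direct substitution.
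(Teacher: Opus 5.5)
Your proposal is correct and follows the same route as the paper's proof. You specialize to $d=1$, $r_1=1$, $r_2=0$, $D=1$ with $C_{\mathbb{Q}}=-\tfrac12$, convert $V(\cdot\mid 0,\tfrac z2)$ into $K_{z/2}$ via \eqref{Bessel fn formula}, and identify $\mathcal{I}_{\mathbb{Q},\pm z}$ with $\mathcal{I}_{\pm z}$. Your extra branch bookkeeping, namely that $|\Arg(x)|<\pi/2$ ensures $((\pi n x)^2)^{1/2}=\pi n x$ and $((\pi n x)^2)^{z/4}=(\pi n x)^{z/2}$, is a detail the paper leaves implicit.
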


We now turn to special cases of Theorem~\ref{Main thm 1} corresponding to particular values of $z$. The first such case arises when $z=0$.

\begin{theorem}\label{Case z=0 F}
For any number field $\mathbb{F}$, let $C_{\mathbb{F}}$ be defined as in \eqref{Laurent series_at s=0_1st coeff} and $a_1 =  \frac{\zeta^{(r_1+r_2)}_{\mathbb{F}}(0)}{(r_1+r_2)!}$. Then, we have
\begin{align*}
\mathcal{I}_{\mathbb{F}, 0}(x) = \mathcal{I}_{\mathbb{F}, 0}\left(\frac{1}{x}\right), \quad \mathrm{for} \,\, x \neq 0, |\Arg(x)| < \frac{\pi d}{2},
\end{align*}
where
\begin{align*}
\mathcal{I}_{\mathbb{F}, 0}(x) &= 2\sqrt{x}\sum_{n=1}^{\infty}\sigma_{\mathbb{F}, 0}(n)V_{\mathbb{F},0}\left(\left(\frac{\pi^{d} n x}{D}\right)^2\right) \notag \\
& + 4^{r_1+r_2}\pi^{r_2}C_{\mathbb{F}}\sqrt{x}\left[d\gamma C_\mathbb{F} - 2a_1 - C_\mathbb{F}\log\left(\frac{D}{4^{r_2}\pi^d x}\right)\right].
\end{align*}
\end{theorem}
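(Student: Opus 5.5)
The plan is to obtain Theorem~\ref{Case z=0 F} as the limiting case $z\to 0$ of Theorem~\ref{Main thm 1}. For $z=0$ the symmetric relation \eqref{main eqn} reads $\mathcal{I}_{\mathbb{F},0}(x)=\mathcal{I}_{\mathbb{F},0}(1/x)$. The only issue is that the term $2^d\pi^{r_2}C_{\mathbb{F}}\Omega_{\mathbb{F}}(z)\bigl[x^{\frac{z-1}{2}}-x^{\frac{1-z}{2}}\bigr]$ in \eqref{Defn of I_F} is of the form $\infty\cdot 0$ at $z=0$. This is because $\Omega_{\mathbb{F}}(z)$ has a simple pole at $z=0$: $\Gamma^{r_1}(z/2)\Gamma^{r_2}(z)$ contributes a pole of order $r_1+r_2$, and $\zeta_{\mathbb{F}}(z)$ a zero of order $r_1+r_2-1$.

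The series part causes no difficulty. Using \eqref{sigma bound} and the exponential decay of the Meijer $G$-function $V_{\mathbb{F},z}$ at infinity, locally uniformly in $z$ near $0$, the sum $\left(\frac{\pi^d x}{D}\right)^{-z/2}2\sqrt{x}\sum\sigma_{\mathbb{F},-z}(n)V_{\mathbb{F},z}(\cdots)$ is analytic in $z$. Its value at $z=0$ is exactly $2\sqrt{x}\sum\sigma_{\mathbb{F},0}(n)V_{\mathbb{F},0}(\cdots)$. Hence the series terms on both sides of \eqref{main eqn} converge to those in the theorem, and it suffices to compute
\[
L(x):=\lim_{z\to0}2^d\pi^{r_2}C_{\mathbb{F}}\Omega_{\mathbb{F}}(z)\bigl[x^{\frac{z-1}{2}}-x^{\frac{1-z}{2}}\bigr].
\]
We then check that $L(x)$ equals the stated elementary term, up to a multiple of $x^{-1/2}-x^{1/2}$-type pieces that are antisymmetric and must be tracked consistently on both sides. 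More carefully, since $\mathcal{I}_{\mathbb{F},z}(x)-\mathcal{I}_{\mathbb{F},-z}(1/x)=0$ for all small $z\ne0$, it is enough that each side has a limit. The limiting identity is then exactly the claim, provided the limits of the elementary terms match the stated expression.

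For the Laurent computation, write $x^{\frac{z-1}{2}}-x^{\frac{1-z}{2}}$. At $z=0$ this is $x^{-1/2}-x^{1/2}$, which is nonzero. So the term genuinely blows up on each side separately. However, on the right-hand side $-z$ replaces $z$, and $\Omega_{\mathbb{F}}(-z)\bigl[x^{-\frac{-z-1}{2}}\cdots\bigr]$ evaluated at $1/x$ gives $\Omega_{\mathbb{F}}(-z)\bigl[x^{\frac{1+z}{2}}-x^{-\frac{1+z}{2}}\bigr]$. Since the residues of $\Omega_{\mathbb{F}}(\pm z)$ have opposite signs, the polar parts cancel in the difference.

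Concretely, I would expand $\Omega_{\mathbb{F}}(z)=\frac{R}{z}+c_0+O(z)$, with
\[
R=C_{\mathbb{F}}\,2^{r_1}
\]
coming from $\Gamma(z/2)^{r_1}\sim(2/z)^{r_1}$ and $\Gamma(z)^{r_2}\sim z^{-r_2}$. The constant $c_0$ involves $a_1$, $\gamma$ (from $\Gamma(z)=1/z-\gamma+\cdots$, contributing $-\frac{d}{2}\gamma$-type terms after adding $r_1\cdot\frac12$ and $r_2$ contributions appropriately) and $\frac12 C_{\mathbb{F}}\log\frac{D}{4^{r_2}\pi^d}$. Similarly, $x^{\frac{z-1}{2}}=x^{-1/2}\bigl(1+\frac z2\log x+\cdots\bigr)$.

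Rather than handle the blow-up directly, the cleanest route is to rewrite the $z$-identity by moving the elementary terms: the combination
\[
\Omega_{\mathbb{F}}(z)\bigl[x^{\frac{z-1}{2}}-x^{\frac{1-z}{2}}\bigr]-\Omega_{\mathbb{F}}(-z)\bigl[x^{\frac{1+z}{2}}-x^{-\frac{1+z}{2}}\bigr]
\]
has a finite limit. This limit splits into a $\sqrt{x}$-part and an $x^{-1/2}$-part, namely $E(x)-E(1/x)$ with
\[
E(x)=\sqrt{x}\,\bigl(-2c_0+R\log x\bigr)\cdot(\text{const}).
\]
Matching constants with $2^d\pi^{r_2}C_{\mathbb{F}}$ produces
\[
4^{r_1+r_2}\pi^{r_2}C_{\mathbb{F}}\sqrt{x}\Bigl[d\gamma C_{\mathbb{F}}-2a_1-C_{\mathbb{F}}\log\frac{D}{4^{r_2}\pi^d x}\Bigr].
\]
Here the factor $4^{r_1+r_2}$ arises from $2^d\cdot2^{r_1}$ together with the factor $2$ from the derivative. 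This bookkeeping of the constant term $c_0$ (the $\gamma$-coefficients of $\Gamma(z/2)$ versus $\Gamma(z)$, which combine to give $d\gamma$, and $a_1$ as the next Taylor coefficient of $\zeta_{\mathbb{F}}(z)/z^{r_1+r_2-1}$) is the main obstacle, and I would verify it against the case $\mathbb{F}=\mathbb{Q}$, where it must reproduce \eqref{Koshliakov Formula}.
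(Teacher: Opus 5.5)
Your proposal is correct and follows the paper's proof: let $z\to0$ in Theorem~\ref{Main thm 1}, move the two series to one side, and evaluate the limit of the difference of the elementary terms with the Laurent expansion \eqref{Omega series} of $\Omega_{\mathbb{F}}$ at $0$. The polar parts $\pm 2^{r_1}C_{\mathbb{F}}/z$ cancel, and the constant terms reorganize into $E(x)-E(1/x)$ exactly as you describe. Drop your early remarks that the term is of the form $\infty\cdot 0$ and that it suffices for ``each side to have a limit''. Neither is true, and it is your later passage to the combination, which is also what the paper does, that makes the argument work.
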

It is worth noting that the classical Ramanujan-Koshliakov identity \eqref{Koshliakov Formula} is recovered as an immediate consequence of the above theorem for the field of rational numbers.

\begin{corollary}\label{Case Koshliakov id}
When $\mathbb{F}=\mathbb{Q}$, the identity obtained from Theorem~\ref{Case z=0 F} reduces to the Ramanujan-Koshliakov identity \eqref{Koshliakov Formula}.
\end{corollary}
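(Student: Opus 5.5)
The plan is to specialize Theorem~\ref{Case z=0 F} to $\mathbb{F}=\mathbb{Q}$ and identify each term with the corresponding term of $\mathcal{I}_0(x)$ in \eqref{Koshliakov Formula}, up to an overall constant factor. For $\mathbb{Q}$ we have $d=1$, $r_1=1$, $r_2=0$ and $D=1$. The coefficients satisfy $\mathtt{a}_{\mathbb{Q}}(n)=1$, so $\sigma_{\mathbb{Q},0}(n)=d(n)$. The order of vanishing of $\zeta(s)$ at $s=0$ is $r_1+r_2-1=0$, so $C_{\mathbb{Q}}=\zeta(0)=-\tfrac12$. This is consistent with \eqref{Laurent series_at s=0_1st coeff}, since $R=h=1$ and $w=2$. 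Finally, $a_1=\zeta'(0)=-\tfrac12\log(2\pi)$.

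Next I would evaluate the Steen function. By \eqref{Short notation} and $r_2=0$, we get $V_{\mathbb{Q},0}(y)=V(y\mid 0,0)$. Formula \eqref{Bessel fn formula} then gives $V(y\mid 0,0)=2K_0(2\sqrt y)$. With $y=(\pi n x)^2$ this is $2K_0(2\pi n x)$, where I take the branch $\sqrt{y}=\pi n x$, valid for $\Re(x)>0$. The series part of $\mathcal{I}_{\mathbb{Q},0}(x)$ is therefore
\begin{align*}
4\sqrt{x}\sum_{n=1}^\infty d(n)K_0(2n\pi x).
\end{align*}

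Then I would handle the elementary term. The prefactor is $4^{r_1+r_2}\pi^{r_2}C_{\mathbb{Q}}=4\cdot(-\tfrac12)=-2$. The bracket is
\begin{align*}
\gamma C_{\mathbb{Q}}-2a_1-C_{\mathbb{Q}}\log\left(\frac{1}{\pi x}\right)
=-\tfrac12\gamma+\log(2\pi)-\tfrac12\log(\pi x).
\end{align*}
Multiplying by $-2\sqrt{x}$ gives
\begin{align*}
\sqrt{x}\left[\gamma-2\log(2\pi)+\log(\pi x)\right]=\sqrt{x}\left[\gamma-\log\left(\frac{4\pi}{x}\right)\right].
\end{align*}
So $\mathcal{I}_{\mathbb{Q},0}(x)$ coincides exactly with $\mathcal{I}_0(x)$.

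Theorem~\ref{Case z=0 F} holds for $|\Arg(x)|<\pi/2$, i.e. $\Re(x)>0$, and this region is preserved by $x\mapsto 1/x$. Hence the theorem yields \eqref{Koshliakov Formula}. The only delicate point is bookkeeping. One must use the correct value $C_{\mathbb{Q}}=\zeta(0)$, since the vanishing order is $0$ and not a derivative. One must also take the right branch in \eqref{Bessel fn formula}. Both are routine checks.
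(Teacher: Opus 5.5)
Your proposal is correct and is essentially the paper's own proof: specialize Theorem~\ref{Case z=0 F} with $D=1$, $r_1=1$, $r_2=0$, $C_{\mathbb{Q}}=-\tfrac12$, $a_1=\zeta'(0)=-\tfrac12\log(2\pi)$, use $V(y\mid 0,0)=2K_0(2\sqrt{y})$ for the series, and simplify $-2\sqrt{x}\bigl[-\tfrac{\gamma}{2}+\log(2\pi)-\tfrac12\log(\pi x)\bigr]$ to $\sqrt{x}\bigl[\gamma-\log\bigl(\tfrac{4\pi}{x}\bigr)\bigr]$. Your remarks that the principal square-root branch of $(\pi n x)^2$ is $\pi n x$ and that $\Re(x)>0$ is preserved under $x\mapsto 1/x$ are correct details that the paper leaves implicit.
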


Next, we consider the case $z=1$ in Theorem~\ref{Main thm 1}.
\begin{theorem}\label{Case z=1 F}
Let $V(x| a_1, a_2, \ldots, a_n)$ be the Steen function defined as in \eqref{Steen fn}. For any number field $\mathbb{F}$, the following identity holds,
\begin{align}\label{I_F(1,x)}
\mathcal{L}_{\mathbb{F}}(x) = \mathcal{L}_{\mathbb{F}}\left(\frac{1}{x}\right) - 2^{2r_1+2r_2}\pi^{r_2}C_{\mathbb{F}}^2 \log(x), \quad \mathrm{for}\,\, x \neq 0,  |\Arg(x)| < \frac{\pi d}{2},
\end{align}
where
\begin{align}
\mathcal{L}_{\mathbb{F}}(x) &=
2^{r_1+r_2}\sqrt{D}\sum_{n=1}^{\infty}\frac{\sigma_{\mathbb{F}, 1}(n)}{n}V\left(\left(\frac{(2\pi)^{d} n x}{D}\right)\bigg|\left(0\right)_{r_1 + r_2}, (1)_{r_2}\right) - \frac{2^{r_1}Dx}{\pi^{r_1+r_2}}C_{\mathbb{F}}\zeta_{\mathbb{F}}(2). \label{IFx}
\end{align}
\end{theorem}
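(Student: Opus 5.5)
The plan is to specialize Theorem~\ref{Main thm 1} at $z=1$. Since $\Omega_{\mathbb{F}}(z)$ has a pole there, I would interpret this as a limit $z\to 1$ in \eqref{main eqn}. I would then rewrite both sides in terms of the Steen function with $d+r_2$ gamma factors appearing in \eqref{IFx}.

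\emph{Step 1: the series on the left.} On the left side $\mathcal{I}_{\mathbb{F},1}(x)$ we have $\sigma_{\mathbb{F},-1}(n)=\sigma_{\mathbb{F},1}(n)/n$ by \eqref{Gen sigma fn}. The parameters of $V_{\mathbb{F},1}$ are $(0)_{r_1+r_2},(\tfrac12)_{r_1+r_2},(\tfrac12)_{r_2},(1)_{r_2}$. I would pair the factors using the duplication formula
\begin{align*}
\Gamma(s)\Gamma(s+\tfrac12)=2^{1-2s}\sqrt{\pi}\,\Gamma(2s), \qquad \Gamma(s+\tfrac12)\Gamma(s+1)=2^{-2s}\sqrt{\pi}\,\Gamma(2s+1).
\end{align*}
This leaves a product of $r_1+r_2$ factors $\Gamma(2s)$ and $r_2$ factors $\Gamma(2s+1)$. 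In the line integral \eqref{Steen fn} defining $V_{\mathbb{F},1}(y^2)$ I would substitute $2s\mapsto s$. This turns it into an explicit constant times $V\big(2^{d}y\,\big|\,(0)_{r_1+r_2},(1)_{r_2}\big)$, with $y=\pi^d n x/D$. The argument is then $(2\pi)^d n x/D$, which is exactly the argument in \eqref{IFx}. The powers of $2$, $\pi$ and $D$ coming from the duplication formula and from the prefactor $(\pi^dx/D)^{-1/2}2\sqrt{x}$ should combine to $2^{r_1+r_2}\sqrt{D}$.

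\emph{Step 2: the series on the right.} On the right side $\mathcal{I}_{\mathbb{F},-1}(1/x)$ we get $\sigma_{\mathbb{F},1}(n)$ and parameters $(0)_{r_1+r_2},(-\tfrac12)_{r_1+r_2},(\tfrac12)_{r_2},(0)_{r_2}$. Shifting $s\mapsto s+\tfrac12$ in the integral brings the parameters back to the same set as on the left. The shift produces an extra factor $y^{-1/2}$, with $y=\pi^d n/(Dx)$. This factor converts $\sigma_{\mathbb{F},1}(n)$ into $\sigma_{\mathbb{F},1}(n)/n$ up to constants and powers of $x$. Combined with the prefactor $(\pi^d/(Dx))^{1/2}2x^{-1/2}$, this should yield exactly the series in $\mathcal{L}_{\mathbb{F}}(1/x)$.

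\emph{Step 3: the non-series terms.} On the right, $\Omega_{\mathbb{F}}(-1)=\Omega_{\mathbb{F}}(2)$ by \eqref{Dedekind_zeta_functional_equation}, and it is finite. The bracket $[x^{-1}-x]$, evaluated at $1/x$, gives $x-x^{-1}$. Expanding $\Omega_{\mathbb{F}}(2)$ through \eqref{Omega_F of fnal eqn} with $\Gamma(1)=1$ gives the two terms $-\tfrac{2^{r_1}Dx}{\pi^{r_1+r_2}}C_{\mathbb{F}}\zeta_{\mathbb{F}}(2)$ and its counterpart in $x^{-1}$. These distribute to $\mathcal{L}_{\mathbb{F}}(x)$ and $\mathcal{L}_{\mathbb{F}}(1/x)$ respectively once everything is moved to one side.

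On the left, I would take the limit $z\to1$ of $2^d\pi^{r_2}C_{\mathbb{F}}\Omega_{\mathbb{F}}(z)\big[x^{\frac{z-1}{2}}-x^{\frac{1-z}{2}}\big]$. The bracket is $(z-1)\log x+O((z-1)^2)$. Meanwhile $(z-1)\Omega_{\mathbb{F}}(z)\to \big(D/(4^{r_2}\pi^d)\big)^{1/2}\pi^{r_1/2}H_{\mathbb{F}}$ by \eqref{Laurent series_at s=1_1st coeff}. Using $\sqrt{D}H_{\mathbb{F}}=-2^{r_1}(2\pi)^{r_2}C_{\mathbb{F}}$, this limit should be proportional to $C_{\mathbb{F}}^2\log x$. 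After dividing through by the common normalizing constant from Step~1, it should become $-2^{2r_1+2r_2}\pi^{r_2}C_{\mathbb{F}}^2\log x$.

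\emph{Justification and expected obstacle.} Passing to the limit needs uniform convergence of the Bessel-type series near $z=1$. This follows from the exponential decay of the Meijer $G$-function for $|\Arg(x)|<\pi d/2$, together with the bound \eqref{sigma bound}. I expect the main obstacle to be bookkeeping rather than analysis. The two sides arrive with different normalizing constants: powers of $2$ from duplication, and powers of $\pi$ and $D$ from the prefactors and from the half-shift. All of these must collapse to the single factor $2^{r_1+r_2}\sqrt{D}$ and produce exactly the stated $\zeta_{\mathbb{F}}(2)$ and $\log x$ coefficients. I would check this first in the case $\mathbb{F}=\mathbb{Q}$, where $V(\cdot\,|\,0)=e^{-\cdot}$ and the formula should reduce to the transformation of $\log\eta$.
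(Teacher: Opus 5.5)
Your route is the paper's proof. You put $z=1$ in Theorem~\ref{Main thm 1} (as a limit) and evaluate the residual term through $H_{\mathbb{F}}$ and $\sqrt{D}H_{\mathbb{F}}=-2^{r_1}(2\pi)^{r_2}C_{\mathbb{F}}$. You turn $V_{\mathbb{F},-1}$ into $V_{\mathbb{F},1}$ by the half-shift, which is Lemma~\ref{Lemma reln bw V_z and V_-z}, and use $\Omega_{\mathbb{F}}(-1)=\Omega_{\mathbb{F}}(2)$. Finally you collapse $V_{\mathbb{F},1}$ by duplication to $2^{r_1+r_2-1}\pi^{d/2}V\big(\frac{(2\pi)^dnx}{D}\,\big|\,(0)_{r_1+r_2},(1)_{r_2}\big)$.

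Two bookkeeping remarks:
\begin{itemize}
\item The prefactor $2\sqrt{D/\pi^d}$ times $2^{r_1+r_2-1}\pi^{d/2}$ is already exactly $2^{r_1+r_2}\sqrt{D}$, so there is nothing to divide by.
\item In Step~2 the shift produces $Y^{-1/2}=y^{-1}=Dx/(\pi^d n)$, where $Y=y^2$ is the argument of the Steen function, not $y^{-1/2}$. Only $y^{-1}$ turns $\sigma_{\mathbb{F},1}(n)$ into $\sigma_{\mathbb{F},1}(n)/n$.
\end{itemize}

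The substantive issue is the sign of the logarithm. Your Step~3 correctly gives the limit term as $2^{r_1+r_2}\sqrt{D}\,C_{\mathbb{F}}H_{\mathbb{F}}\log x=-2^{2r_1+2r_2}\pi^{r_2}C_{\mathbb{F}}^2\log x$. But this term sits on the same side as $\mathcal{L}_{\mathbb{F}}(x)$, while the other side assembles into exactly $\mathcal{L}_{\mathbb{F}}(1/x)$. The paper's proof reaches the same equation \eqref{Before solving Steen} and then states the result without tracking this sign. So both arguments actually prove
\begin{align*}
\mathcal{L}_{\mathbb{F}}(x)=\mathcal{L}_{\mathbb{F}}\left(\frac{1}{x}\right)+2^{2r_1+2r_2}\pi^{r_2}C_{\mathbb{F}}^2\log x,
\end{align*}
with a plus sign, not the printed minus.

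The $\mathbb{Q}$ check you propose confirms this. There $\mathcal{L}_{\mathbb{Q}}(x)=2\sum_{n\ge1}\frac{\sigma(n)}{n}e^{-2\pi nx}+\frac{\pi x}{6}$ and $2^{2}C_{\mathbb{Q}}^2=1$. The printed statement would then give $\mathcal{L}_{\mathbb{Q}}(x)-\mathcal{L}_{\mathbb{Q}}(1/x)=-\log x$, whereas \eqref{Id z=1 Q}, the $\eta$-transformation, gives $+\log x$. Likewise, Theorem~\ref{Dedekind at 2 thm}, with its term $-\frac{(2\pi)^dC_{\mathbb{F}}}{D}\log x$, follows from the plus version.

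So carry out the plan as you describe it, but do not force the final matching to the printed sign. As printed, the identity already fails for $\mathbb{F}=\mathbb{Q}$.
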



When $\mathbb{F}=\mathbb{Q}$, Theorem \ref{Case z=1 F} reduces to the following well-known identity of Ramanujan.
\begin{corollary}\label{Case z=1 Q}
For $\Re(x) > 0$, we obtain
\begin{align}\label{Id z=1 Q}
\sum_{n=1}^{\infty}\frac{\sigma(n)}{n}e^{-2\pi nx} - \sum_{n=1}^{\infty}\frac{\sigma(n)}{n}e^{-\frac{2\pi n}{x}} = \frac{\pi}{12}\left(\frac{1}{x} - x \right) + \frac{1}{2}\log(x).
\end{align}
\end{corollary}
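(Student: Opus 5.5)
The plan is to specialize Theorem~\ref{Case z=1 F} to $\mathbb{F}=\mathbb{Q}$ and simplify every ingredient explicitly. For $\mathbb{Q}$ we have $r_1=1$, $r_2=0$, $d=1$ and $D=1$. The condition $|\Arg(x)|<\frac{\pi d}{2}$ is then exactly $\Re(x)>0$, which is the range claimed. Since $\mathtt{a}_{\mathbb{Q}}(n)=1$ for all $n$, \eqref{Gen sigma fn} gives $\sigma_{\mathbb{Q},1}(n)=\sigma(n)$. The constant in \eqref{Laurent series_at s=0_1st coeff} with $r_1+r_2-1=0$ is $C_{\mathbb{Q}}=\zeta(0)=-\tfrac12$, consistent with $h=R=1$ and $w=2$. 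Finally $\zeta_{\mathbb{Q}}(2)=\pi^2/6$.

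Next I will identify the special function. With $r_1+r_2=1$ and $r_2=0$, the Steen function in \eqref{IFx} is $V(2\pi n x\mid 0)$. By the first case of \eqref{Bessel fn formula} this equals $e^{-2\pi n x}$, which is valid for $\Re(x)>0$. Substituting these values into \eqref{IFx} gives
\begin{align*}
\mathcal{L}_{\mathbb{Q}}(x)=2\sum_{n=1}^{\infty}\frac{\sigma(n)}{n}e^{-2\pi n x}+\frac{\pi x}{6}.
\end{align*}
The constant in front of the logarithm in \eqref{I_F(1,x)} is $2^{2}C_{\mathbb{Q}}^2=1$.

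Writing $S(x)=\sum_{n\ge1}\frac{\sigma(n)}{n}e^{-2\pi nx}$, the identity \eqref{I_F(1,x)} becomes a linear relation between $S(x)$ and $S(1/x)$ with the terms $\frac{\pi x}{6}$, $\frac{\pi}{6x}$ and $\log x$. Rearranging and dividing by $2$ gives \eqref{Id z=1 Q}.

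The main obstacle is bookkeeping of signs and constants, in particular the sign of the $\log x$ term, which depends on the sign conventions fixed in the proof of Theorem~\ref{Case z=1 F}. I will cross-check against the classical transformation $\eta(i/x)=\sqrt{x}\,\eta(ix)$ together with $\log\eta(ix)=-\frac{\pi x}{12}-S(x)$. Taking logarithms of the transformation yields
\begin{align*}
S(x)-S(1/x)=\frac{\pi}{12}\left(\frac1x-x\right)+\frac12\log x,
\end{align*}
which is exactly \eqref{Id z=1 Q}.

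This check matters because a direct transcription of \eqref{I_F(1,x)} as printed gives $-\frac12\log x$, not $+\frac12\log x$. So the sign of the $C_{\mathbb{F}}^2\log(x)$ term in \eqref{I_F(1,x)} has to be traced back through the residue computation at the double pole in the proof of Theorem~\ref{Case z=1 F}. If that computation confirms the sign, I will present the corollary as a pure specialization. If it does not, I will derive the corollary from the $z\to1$ limit of Theorem~\ref{Main thm 1}, using the sign pinned down by the $\eta$-transformation.
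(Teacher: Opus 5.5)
Your route is the paper's: specialize Theorem \ref{Case z=1 F} to $\mathbb{F}=\mathbb{Q}$. Every ingredient you compute is correct, including $V(2\pi nx\mid 0)=e^{-2\pi nx}$, $\mathcal{L}_{\mathbb{Q}}(x)=2S(x)+\frac{\pi x}{6}$ and $2^{2}C_{\mathbb{Q}}^2=1$. Your alarm is also justified: \eqref{I_F(1,x)} as printed really does give $-\frac12\log x$. The gap is that you leave the sign unresolved, and your contingency plan cannot resolve it. Theorem \ref{Case z=1 F} \emph{is} the $z\to1$ limit of Theorem \ref{Main thm 1}, so ``falling back'' on that limit changes nothing. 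Fixing the sign by the $\eta$-transformation assumes the conclusion, because \eqref{Id z=1 Q} is equivalent to that transformation, as the paper remarks. Your $\eta$ computation is a valid stand-alone proof; for non-real $x$, extend from $x>0$ by analytic continuation, since both sides are holomorphic on $\Re(x)>0$. But in that proof Theorem \ref{Case z=1 F} plays no role.

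The sign is settled by the paper's own computation, and no double-pole residue is involved. The $\log x$ term comes from the limit \eqref{limit value at 1}, which puts $+2^{r_1+r_2}\sqrt{D}\,C_{\mathbb{F}}H_{\mathbb{F}}\log x$ into \eqref{IF1(x)}, on the same side as $\mathcal{L}_{\mathbb{F}}(x)$ in \eqref{Before solving Steen}. Substituting $\sqrt{D}H_{\mathbb{F}}=-2^{r_1}(2\pi)^{r_2}C_{\mathbb{F}}$ turns \eqref{Before solving Steen} into $\mathcal{L}_{\mathbb{F}}(x)-2^{2r_1+2r_2}\pi^{r_2}C_{\mathbb{F}}^2\log x=\mathcal{L}_{\mathbb{F}}(1/x)$. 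So the correct form of \eqref{I_F(1,x)} carries $+2^{2r_1+2r_2}\pi^{r_2}C_{\mathbb{F}}^2\log x$, and the slip is only in the final rearrangement. Two checks agree with this: Theorem \ref{Dedekind at 2 thm} as printed is consistent with the corrected sign, and the combination $2S(x)-\frac12\log x+\frac{\pi x}{6}$ displayed in the paper's proof of this corollary is invariant under $x\mapsto 1/x$ only under it. For $\mathbb{Q}$, \eqref{Before solving Steen} reads $2S(x)-\log x+\frac{\pi x}{6}=2S(1/x)+\frac{\pi}{6x}$, which is \eqref{Id z=1 Q} after dividing by $2$. Citing this step in place of your conditional turns the specialization into a complete proof along the paper's lines.
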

\begin{remark}
The identity in the above corollary is equivalent to the transformation formula for the logarithm of Dedekind’s eta function $\eta(z)$, namely,
\begin{align*}
\log\left(\eta\left(-\frac{1}{z}\right)\right) - \log(\eta(z)) &= \frac{1}{2}\log(-iz),
\end{align*}
upon setting $x=\frac{i}{z}$ in \eqref{Id z=1 Q}.
\end{remark}

\subsection{Special values of $\zeta_{\mathbb{F}}(s)$:} In this section, we show how the preceding theorems yield special values of $\zeta_{\mathbb{F}}(s)$. It is interesting to see that Theorem \ref{Case z=1 F} leads to a new formula for $\zeta_{\mathbb{F}}(2)$ for any number field $\mathbb{F}$.
\begin{theorem}\label{Dedekind at 2 thm}
For any number field $\mathbb{F}$ of degree $d$ and any $x \in \mathbb{C} \setminus \{0\}$ with $|\Arg(x)| < \frac{\pi d}{2}$, we have
\begin{align}\label{Dedekind zeta at 2}
\zeta_{\mathbb{F}}(2)\left(x-\frac{1}{x}\right) &= S_\mathbb{F}(x) - S_\mathbb{F}\left(\frac{1}{x}\right) - \frac{(2\pi)^{d}C_{\mathbb{F}}}{D}\log(x),
\end{align}
where
\begin{align*}
S_\mathbb{F}(x) := \frac{2^{r_2}\pi^{r_1+r_2}}{\sqrt{D}C_\mathbb{F}}\sum_{n=1}^{\infty}\frac{\sigma_{\mathbb{F}, 1}(n)}{n}V\left(\left(\frac{(2\pi)^{d} n x}{D}\right)\bigg|\left(0\right)_{r_1 + r_2}, (1)_{r_2}\right).
\end{align*}
\end{theorem}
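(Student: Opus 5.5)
This follows directly from Theorem~\ref{Case z=1 F}, which we take as already established. The plan is to unpack the definition \eqref{IFx} of $\mathcal{L}_{\mathbb{F}}(x)$ on both sides of \eqref{I_F(1,x)}, isolate the terms involving $\zeta_{\mathbb{F}}(2)$, and divide through by a suitable constant.

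First, I write $\mathcal{L}_{\mathbb{F}}(x) = 2^{r_1+r_2}\sqrt{D}\,T(x) - \frac{2^{r_1}D}{\pi^{r_1+r_2}}C_{\mathbb{F}}\zeta_{\mathbb{F}}(2)\,x$. Here $T(x)$ denotes the series $\sum_{n\ge1}\frac{\sigma_{\mathbb{F},1}(n)}{n}V\big(\frac{(2\pi)^d n x}{D}\big|(0)_{r_1+r_2},(1)_{r_2}\big)$. Substituting this into \eqref{I_F(1,x)} and moving the linear terms to one side gives
\begin{align*}
\frac{2^{r_1}D}{\pi^{r_1+r_2}}C_{\mathbb{F}}\zeta_{\mathbb{F}}(2)\left(x-\frac{1}{x}\right) = 2^{r_1+r_2}\sqrt{D}\left(T(x)-T\left(\frac1x\right)\right) + 2^{2r_1+2r_2}\pi^{r_2}C_{\mathbb{F}}^2\log x .
\end{align*}

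Next, I divide by $\frac{2^{r_1}D}{\pi^{r_1+r_2}}C_{\mathbb{F}}$. This is legitimate because $C_{\mathbb{F}}=-R_{\mathbb{F}}h_{\mathbb{F}}/w_{\mathbb{F}}\neq0$. The two coefficients then simplify as follows.
\begin{itemize}
\item The coefficient of $T$ becomes $\frac{2^{r_2}\pi^{r_1+r_2}}{\sqrt{D}\,C_{\mathbb{F}}}$. This is exactly the normalization in $S_{\mathbb{F}}$, so the series terms become $S_{\mathbb{F}}(x)-S_{\mathbb{F}}(1/x)$.
\item The logarithmic coefficient becomes $\frac{2^{r_1+2r_2}\pi^{r_1+2r_2}C_{\mathbb{F}}}{D}=\frac{(2\pi)^dC_{\mathbb{F}}}{D}$, using $d=r_1+2r_2$.
\end{itemize}

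The only real concern is the sign of the $\log x$ term. Since \eqref{I_F(1,x)} carries a minus sign in front of $2^{2r_1+2r_2}\pi^{r_2}C_{\mathbb{F}}^2\log x$, rearranging as above produces $+\frac{(2\pi)^dC_{\mathbb{F}}}{D}\log x$. The stated formula \eqref{Dedekind zeta at 2}, however, has $-\frac{(2\pi)^dC_{\mathbb{F}}}{D}\log x$. I would resolve this by checking the case $\mathbb{F}=\mathbb{Q}$ against Corollary~\ref{Case z=1 Q}, where $C_{\mathbb{Q}}=\zeta(0)=-\tfrac12$, $D=1$, and $V(2\pi nx\,|\,0)=e^{-2\pi nx}$. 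In that case $S_{\mathbb{Q}}(x)=-2\pi\sum_{n\ge1}\frac{\sigma(n)}{n}e^{-2\pi nx}$, and the corollary gives $S_{\mathbb{Q}}(x)-S_{\mathbb{Q}}(1/x)=\frac{\pi^2}{6}\left(x-\frac1x\right)-\pi\log x$. This is consistent with $\zeta(2)\left(x-\frac1x\right)=S_{\mathbb{Q}}(x)-S_{\mathbb{Q}}(1/x)+\pi\log x$, and $+\pi\log x$ equals $-\frac{2\pi C_{\mathbb{Q}}}{D}\log x$. So the displayed sign in \eqref{Dedekind zeta at 2} is the correct one, and the naive rearrangement must be off by a sign somewhere.

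The main obstacle is therefore pinning down this sign convention in \eqref{I_F(1,x)}. I would recompute it from the $z\to1$ limit in Theorem~\ref{Main thm 1}, or fix it directly from the $\mathbb{Q}$ case, and then state \eqref{Dedekind zeta at 2} with the sign matching Corollary~\ref{Case z=1 Q}. Everything else is a direct algebraic rearrangement.
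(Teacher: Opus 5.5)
Your approach is the paper's: its proof of Theorem~\ref{Dedekind at 2 thm} is the rearrangement of \eqref{I_F(1,x)} followed by division by $\frac{2^{r_1}D}{\pi^{r_1+r_2}}C_{\mathbb{F}}$, and your two constants are right. Your sign diagnosis is also right. The rearrangement is fine; the printed statement of Theorem~\ref{Case z=1 F} has the wrong sign. In the paper's intermediate identity \eqref{Before solving Steen}, the term $2^{r_1+r_2}\sqrt{D}\,C_{\mathbb{F}}H_{\mathbb{F}}\log x$ sits on the same side as the $x$-series. It was moved to the other side without a sign change when \eqref{I_F(1,x)} was written down. The paper's one-line proof then silently uses the correct sign.

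What your proposal leaves open is the sign for general $\mathbb{F}$. The check at $\mathbb{F}=\mathbb{Q}$ only tells you which of two candidate formulas survives in one case; it proves neither, so ``fix it directly from the $\mathbb{Q}$ case'' is not an argument. You need to carry out the recomputation you mention, which is short. As $z\to1$ you have $\Omega_{\mathbb{F}}(z)\sim\frac{\sqrt{D}H_{\mathbb{F}}}{(2\pi)^{r_2}(z-1)}$ and $x^{\frac{z-1}{2}}-x^{\frac{1-z}{2}}=(z-1)\log x+O((z-1)^2)$. Together with $\sqrt{D}H_{\mathbb{F}}=-2^{r_1}(2\pi)^{r_2}C_{\mathbb{F}}$, the correction term in $\mathcal{I}_{\mathbb{F},1}(x)$ is
\begin{align*}
\lim_{z\to1}2^{d}\pi^{r_2}C_{\mathbb{F}}\Omega_{\mathbb{F}}(z)\left[x^{\frac{z-1}{2}}-x^{\frac{1-z}{2}}\right]=2^{r_1+r_2}\sqrt{D}\,C_{\mathbb{F}}H_{\mathbb{F}}\log x=-4^{r_1+r_2}\pi^{r_2}C_{\mathbb{F}}^{2}\log x .
\end{align*}
By contrast, $\mathcal{I}_{\mathbb{F},-1}(1/x)$ contains no logarithm. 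Its correction term is only $2^{d}\pi^{r_2}C_{\mathbb{F}}\Omega_{\mathbb{F}}(-1)\left(x-\frac{1}{x}\right)=\frac{2^{r_1}D}{\pi^{r_1+r_2}}C_{\mathbb{F}}\zeta_{\mathbb{F}}(2)\left(x-\frac{1}{x}\right)$, using $\Omega_{\mathbb{F}}(-1)=\Omega_{\mathbb{F}}(2)$. Reduce both series to $2^{r_1+r_2}\sqrt{D}\,T(x)$ and $2^{r_1+r_2}\sqrt{D}\,T(1/x)$ via \eqref{Gen sigma fn}, \eqref{V_z and V_-z} and the duplication formula. Then $\mathcal{I}_{\mathbb{F},1}(x)=\mathcal{I}_{\mathbb{F},-1}(1/x)$ becomes
\begin{align*}
\mathcal{L}_{\mathbb{F}}(x)=\mathcal{L}_{\mathbb{F}}\left(\frac{1}{x}\right)+4^{r_1+r_2}\pi^{r_2}C_{\mathbb{F}}^{2}\log x .
\end{align*}
Feeding this into your rearrangement gives exactly \eqref{Dedekind zeta at 2}. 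For $\mathbb{F}=\mathbb{Q}$ it reads $\mathcal{L}_{\mathbb{Q}}(x)=\mathcal{L}_{\mathbb{Q}}(1/x)+\log x$, which is Corollary~\ref{Case z=1 Q}, consistent with your check.
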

\begin{remark}
We want to emphasize that the above result is true any non-zero complex $x$ with $|\Arg(x)| < \frac{\pi d}{2}$. In particular, taking $x=2$, we numerically evaluated $\zeta_{\mathbb{F}}(2)$ for some quadratic fields. See Table \ref{Zeta_2}.
\end{remark}
Next, we find a formula for $\zeta_{\mathbb{F}}(2m+1)$ using Theorem \ref{Main thm 1}.
\begin{theorem}\label{Dedekind zeta at 2m+1}
Let $m \in \mathbb{N}$. For any number field $\mathbb{F}$ of degree $d$ and any non-zero complex $x$ with $|\Arg(x)| < \frac{\pi d}{2}$, we have
\begin{align*}
& \zeta_{\mathbb{F}}(2m+1) \left[x^{m} - x^{-m}\right] \\
 &= \left( \frac{4^{r_2} \pi^d}{D}\right)^{m+\frac{1}{2}} \left( \frac{(m-1)!}{ 2^{1-2m} \sqrt{\pi} (2m-1)!} \right)^{r_1} \left(\frac{1}{(2m)!} \right)^{r_2}\left[K_{\mathbb{F}, m}(x) - K_{\mathbb{F}, m}\left(\frac{1}{x}\right)\right] \notag \\
&+ \sqrt{\frac{D}{4^{r_2}\pi^d}} \left(\frac{m! (m-1)!}{2^{1-2m} \sqrt{\pi} (2m-1)!} \right)^{r_1} (2m+1)^{r_2} \zeta_{\mathbb{F}}(2m+2) \left[x^{m+1} - x^{-m-1}\right],
\end{align*}
where
\begin{align*}
K_{\mathbb{F}, m}(x) = \left(\frac{D}{\pi^d}\right)^{m+\frac{1}{2}}\frac{x^m}{2^{d-1}\pi^{r_2}C_{\mathbb{F}}}\sum_{n=1}^{\infty}\sigma_{\mathbb{F}, -2m-1}(n)V_{\mathbb{F}, 2m+1}\left(\left(\frac{\pi^d n}{Dx}\right)^2\right).
\end{align*}
\end{theorem}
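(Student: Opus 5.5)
The plan is to specialize Theorem~\ref{Main thm 1} to $z=2m+1$ and then use the functional equation \eqref{Dedekind_zeta_functional_equation} to rewrite the resulting constants as $\zeta_{\mathbb{F}}(2m+1)$ and $\zeta_{\mathbb{F}}(2m+2)$. With $z=2m+1$ the identity \eqref{main eqn} reads $\mathcal{I}_{\mathbb{F},2m+1}(x)=\mathcal{I}_{\mathbb{F},-2m-1}(1/x)$.

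The left side contains $\sigma_{\mathbb{F},-2m-1}(n)V_{\mathbb{F},2m+1}$ together with the main term
\[
2^d\pi^{r_2}C_{\mathbb{F}}\Omega_{\mathbb{F}}(2m+1)\left[x^{m}-x^{-m}\right].
\]
On the right side we have $\sigma_{\mathbb{F},2m+1}(n)=n^{2m+1}\sigma_{\mathbb{F},-2m-1}(n)$ and the Steen function $V_{\mathbb{F},-2m-1}$. The first step is to convert this side into the same shape as the left. From the definition \eqref{Steen fn}, shifting $s\mapsto s-\frac{z}{2}$ in the integral gives
\[
V\bigl(y\mid a_1-\tfrac{z}{2},\dots\bigr)=y^{-z/2}\,V(y\mid a_1,\dots).
\]
Since the parameter list for $-z$ is that for $z$ shifted by $-z/2$, we get $V_{\mathbb{F},-z}(y)=y^{-z/2}V_{\mathbb{F},z}(y)$, valid once the contour is moved past the poles. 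For $z=2m+1$ a finite number of poles lie between the two contours. I would handle this directly: $V_{\mathbb{F},-2m-1}(y)$ is defined with $c$ to the right of all poles, so the shift is an exact identity, and no residues arise.

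Substituting $y=(\pi^dn/(Dx))^2$, the factor $y^{-(2m+1)/2}$ cancels $n^{2m+1}$. Hence the series on the right becomes $x^{\text{power}}\sum_n\sigma_{\mathbb{F},-2m-1}(n)V_{\mathbb{F},2m+1}\bigl((\pi^dn/(Dx))^2\bigr)$. Tracking the prefactors $(\pi^d/(Dx))^{(2m+1)/2}\cdot2x^{-1/2}$ should produce exactly $K_{\mathbb{F},m}(1/x)$ up to the normalizing constant $2^d\pi^{r_2}C_{\mathbb{F}}(\pi^d/D)^{m+1/2}\cdot\tfrac12$. Similarly the left series becomes the same constant times $K_{\mathbb{F},m}(x)$. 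This is a bookkeeping step, and it fixes the definition of $K_{\mathbb{F},m}$ in the statement.

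Next come the main terms. On the left we have $\Omega_{\mathbb{F}}(2m+1)[x^m-x^{-m}]$. On the right we have $\Omega_{\mathbb{F}}(-2m-1)\bigl[x^{m+1}-x^{-m-1}\bigr]$ up to sign, since the bracket $x^{(z-1)/2}-x^{(1-z)/2}$ at $z\to-z$ and $x\to1/x$ gives $x^{m+1}-x^{-m-1}$. Using \eqref{Dedekind_zeta_functional_equation}, I would write $\Omega_{\mathbb{F}}(-2m-1)=\Omega_{\mathbb{F}}(2m+2)$. Then I expand both $\Omega_{\mathbb{F}}(2m+1)$ and $\Omega_{\mathbb{F}}(2m+2)$ via \eqref{Omega_F of fnal eqn}, with
\[
\Gamma\!\left(m+\tfrac12\right)=\frac{\sqrt{\pi}\,(2m)!}{4^m m!},\quad \Gamma(2m+1)=(2m)!,\quad \Gamma(m+1)=m!,\quad \Gamma(2m+2)=(2m+1)!.
\]
Dividing through by $2^d\pi^{r_2}C_{\mathbb{F}}\Omega_{\mathbb{F}}(2m+1)/\zeta_{\mathbb{F}}(2m+1)$ isolates $\zeta_{\mathbb{F}}(2m+1)[x^m-x^{-m}]$. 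The ratio of gamma factors then gives the stated coefficients: $(m-1)!/(2^{1-2m}\sqrt{\pi}(2m-1)!)$ is $1/\Gamma(m+\frac12)$ in disguise, and $(2m+1)^{r_2}=\Gamma(2m+2)/\Gamma(2m+1)$. The power of $D/(4^{r_2}\pi^d)$ is $-(m+\frac12)$ for the first term and $+\frac12$ for the second.

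The main obstacle is the sign and constant bookkeeping in this last step, together with justifying that the Steen shift identity holds without residue contributions. For the latter I would note that both integrals are absolutely convergent on any line to the right of all poles, so the substitution is legitimate. The restriction $|\Arg(x)|<\pi d/2$ is inherited directly from Theorem~\ref{Main thm 1}, since $|\Arg(y)|<\pi d$.
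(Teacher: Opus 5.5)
Your strategy is the paper's own. You specialize Theorem~\ref{Main thm 1} at $z=2m+1$ and convert the right-hand series with $\sigma_{\mathbb{F},2m+1}(n)=n^{2m+1}\sigma_{\mathbb{F},-2m-1}(n)$ and the shift identity. That identity is \eqref{V_z and V_-z}: a pure change of variables carrying one admissible line to another, so no poles lie ``between contours.'' You then replace $\Omega_{\mathbb{F}}(-2m-1)$ by $\Omega_{\mathbb{F}}(2m+2)$, divide by $2^d\pi^{r_2}C_{\mathbb{F}}$, and use $\Gamma(m+\frac12)=2^{1-2m}\sqrt{\pi}\,(2m-1)!/(m-1)!$. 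Your final gamma-factor and $D/(4^{r_2}\pi^d)$ exponents are right. The $\Omega_{\mathbb{F}}(-2m-1)$ term enters with a plus sign, not merely ``up to sign.''

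There is one concrete slip that, carried out as written, flips the sign of the result: you attach the two series to the wrong values of $K_{\mathbb{F},m}$. $K_{\mathbb{F},m}(x)$ is built from $x^m$ and $V_{\mathbb{F},2m+1}\bigl((\pi^d n/(Dx))^2\bigr)$. So the converted right-hand series, $(\pi^d/D)^{-m-\frac12}\,2x^{m}\sum_n\sigma_{\mathbb{F},-2m-1}(n)V_{\mathbb{F},2m+1}\bigl((\pi^dn/(Dx))^2\bigr)$, equals $2^d\pi^{r_2}C_{\mathbb{F}}K_{\mathbb{F},m}(x)$. The left-hand series, $(\pi^d/D)^{-m-\frac12}\,2x^{-m}\sum_n\sigma_{\mathbb{F},-2m-1}(n)V_{\mathbb{F},2m+1}\bigl((\pi^dnx/D)^2\bigr)$, equals $2^d\pi^{r_2}C_{\mathbb{F}}K_{\mathbb{F},m}(1/x)$. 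With your labels, moving the left series across gives $K_{\mathbb{F},m}(1/x)-K_{\mathbb{F},m}(x)$, the negative of the stated bracket.

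Your constant is also off. With prefactors included, it is exactly $2^d\pi^{r_2}C_{\mathbb{F}}$, the same factor that multiplies the $\Omega_{\mathbb{F}}$ terms. Dividing therefore gives $\Omega_{\mathbb{F}}(2m+1)[x^m-x^{-m}]=K_{\mathbb{F},m}(x)-K_{\mathbb{F},m}(1/x)+\Omega_{\mathbb{F}}(2m+2)[x^{m+1}-x^{-m-1}]$ with unit coefficient. The constant $2^{d-1}\pi^{r_2}C_{\mathbb{F}}(\pi^d/D)^{m+\frac12}$ you quote only relates the bare sum $x^m\sum_n(\cdots)$ to $K_{\mathbb{F},m}(x)$. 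If you applied it to the full series terms, you would pick up a spurious factor $\frac12(\pi^d/D)^{m+\frac12}$ in front of the $K$-bracket. With these corrections your argument reproduces the paper's proof.
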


\begin{remark} 
If we consider $x$ to be a primitive $2(m+1)$-th root of unity such that $|\Arg(x)| < \frac{\pi d}{2}$, then we get
\begin{align*}
& \zeta_{\mathbb{F}}(2m+1) \left[x^{m} - x^{-m}\right] \\
 &= \left( \frac{4^{r_2} \pi^d}{D}\right)^{m+\frac{1}{2}} \left( \frac{(m-1)!}{ 2^{1-2m} \sqrt{\pi} (2m-1)!} \right)^{r_1} \left(\frac{1}{(2m)!} \right)^{r_2}\left[K_{\mathbb{F}, m}(x) - K_{\mathbb{F}, m}\left(\frac{1}{x}\right)\right].
\end{align*}
In particular, for $m=2$ and $x=e^{\frac{i\pi}{3}}$, we obtain a new formula for  $\zeta_{\mathbb{F}}(5)$ over any number field:
\begin{align*}
\zeta_{\mathbb{F}}(5) &= \frac{1}{i\sqrt{3}}\left( \frac{4^{r_2} \pi^d}{D}\right)^{\frac{5}{2}} \left( \frac{4}{3\sqrt{\pi}}\right)^{r_1} \left(\frac{1}{24} \right)^{r_2}\left[K_{\mathbb{F}, 2}\left(e^{\frac{i\pi}{3}}\right) - K_{\mathbb{F}, 2}\left(e^{-\frac{i\pi}{3}}\right)\right],
\end{align*}
where
\begin{align*}
K_{\mathbb{F}, 2}(x) := \left(\frac{D}{\pi^d}\right)^{\frac{5}{2}}\frac{x^2}{2^{d-1}\pi^{r_2}C_{\mathbb{F}}}\sum_{n=1}^{\infty}\sigma_{\mathbb{F}, -5}(n)V_{\mathbb{F}, 5}\left(\left(\frac{\pi^d n}{Dx}\right)^2\right).
\end{align*}
\end{remark}

\section{{\bf Prerequisites}}
In this section, we collect several lemmas and identities that will be further used
in the proofs of the main results. We begin with some standard properties of the gamma
function. The gamma function $\Gamma(s)$ satisfies the classical Legendre duplication formula and the functional equation given by
\begin{align}\label{Legendre's Duplication Formula}
\Gamma(s)\Gamma\left(s + \frac{1}{2}\right)  &= 2^{1-2s}\sqrt{\pi}\Gamma(2s),  \quad \Gamma(s+1) = s\Gamma(s).
\end{align}
We shall also require the following Laurent and Taylor series expansions. As $s\to1$, the
Dedekind zeta function admits the expansion,
\begin{align}
(s-1)\zeta_{\mathbb{F}}(s) &= H_{\mathbb{F}}+\gamma_{\mathbb{F}}(s-1)+O\!\left((s-1)^2\right),
\label{zeta around 1}
\end{align}
where $H_{\mathbb{F}}$ is the residue of the Dedekind zeta function at the pole $s=1$ given in \eqref{Laurent series_at s=1_1st coeff}. As $s\to 0$, we also have
\begin{align}
\frac{\zeta_{\mathbb{F}}(s)}{s^{r_1+r_2-1}}
&= C_{\mathbb{F}}+a_1 s+O(s^2), \label{zeta series around 0}
\end{align}
where $a_1 =  \frac{\zeta^{(r_1+r_2)}_{\mathbb{F}}(0)}{(r_1+r_2)!}$ and $C_\mathbb{F}$ is defined as in \eqref{Laurent series_at s=0_1st coeff}.
The gamma function has the following expansion as $s \to 0$,
\begin{align*}
\Gamma(s) &=\frac{1}{s}-\gamma+O(s),
\end{align*}
which yields
\begin{align}
(s\Gamma(s))^{m} &= 1-m\gamma s+O(s^2),\label{gamma power series} \\
\left(s\Gamma\!\left(\frac{s}{2}\right)\right)^{m} &= 2^{m}-2^{m-1}m\gamma s+O(s^2).
\label{gamma series s/2}
\end{align}
Finally, for $x>0$,
\begin{align}
x^{s} &= 1+s\log x+O(s^2).
\label{exp series}
\end{align}
Using the expansions
\eqref{zeta series around 0}, \eqref{gamma power series},
\eqref{gamma series s/2}, and \eqref{exp series}, we obtain the Laurent expansion of
$\Omega_{\mathbb{F}}(s)$ (defined in \eqref{Omega_F of fnal eqn}) about $s=0$:
\begin{align}\label{Omega series}
\Omega_{\mathbb{F}}(s) &= \frac{2^{r_1}C_{\mathbb{F}}}{s} + 2^{r_1}a_1 - 2^{r_1-1}d\gamma C_{\mathbb{F}} + 2^{r_1-1}C_{\mathbb{F}}\log\!\left(\frac{D}{4^{r_2}\pi^{d}}\right) + O(s).
\end{align}
The next lemma provides a Mellin–transform representation for a product of Gamma
functions that will play a key role in later sections.
\begin{lemma}\label{Lemma for Mellin trans}
The following identity holds for $\Re(s) > \max\{0, -\Re(z)\}$,
\begin{align*}
\left(\frac{D}{4^{r_2}\pi^{d}}\right)^{s}\Gamma^{r_1}\left(\frac{s}{2}\right)\Gamma^{r_1}\left(\frac{s+z}{2}\right)\Gamma^{r_2}(s)\Gamma^{r_2}(s+z)
&= \frac{2^{r_2 z + 1}}{(4\pi)^{r_2}}\int\limits_{0}^\infty x^{s-1}V_{\mathbb{F},z}\left(\left(\frac{\pi^{d} x}{D}\right)^2\right) \mathrm{d}x.
\end{align*}
\end{lemma}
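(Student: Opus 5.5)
The plan is to compute the right-hand side directly via Mellin inversion of the Steen function \eqref{Steen fn}. By definition, $V_{\mathbb{F},z}(y)$ is the inverse Mellin transform of
\[
\prod_j \Gamma(w+a_j)=\Gamma^{r_1+r_2}(w)\,\Gamma^{r_1+r_2}\!\left(w+\tfrac{z}{2}\right)\Gamma^{r_2}\!\left(w+\tfrac12\right)\Gamma^{r_2}\!\left(w+\tfrac{1+z}{2}\right)
\]
along a line $\Re(w)=c>\max\{0,-\Re(z)/2\}$. Hence, by the Mellin inversion theorem,
\[
\int_0^\infty y^{w-1}V_{\mathbb{F},z}(y)\,\mathrm{d}y=\prod_j\Gamma(w+a_j), \qquad \Re(w)>\max\{0,-\Re(z)/2\}.
\]

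To justify this inversion, I would check the behaviour of $V_{\mathbb{F},z}$ at both ends. As $y\to\infty$ it decays like $\exp(-c\,y^{1/(2d)})$, by the standard asymptotics of $G^{2d,0}_{0,2d}$; alternatively, shift the contour to the right and use Stirling's formula. As $y\to 0$ it is $O(y^{-\max(0,-\Re(z)/2)-\epsilon})$ up to logarithms, obtained by shifting the contour left past the leftmost poles. Together these make the integral absolutely convergent in the stated half-plane. The gamma product decays exponentially on vertical lines, which suffices to apply the inversion theorem. This analytic justification is the only point needing care, and I expect it to be the main (mild) obstacle; everything after it is bookkeeping.

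Next, I would substitute $y=(\pi^d x/D)^2$, so that $x=(D/\pi^d)\,y^{1/2}$ and $\mathrm{d}x=\tfrac12 (D/\pi^d)\,y^{-1/2}\,\mathrm{d}y$. This gives
\[
\int_0^\infty x^{s-1}V_{\mathbb{F},z}\!\left(\left(\tfrac{\pi^d x}{D}\right)^2\right)\mathrm{d}x=\frac12\left(\frac{D}{\pi^d}\right)^{s}\int_0^\infty y^{\frac s2-1}V_{\mathbb{F},z}(y)\,\mathrm{d}y .
\]
Taking $w=s/2$ is valid exactly when $\Re(s)>\max\{0,-\Re(z)\}$, which is the hypothesis of the lemma. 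The result is
\[
\frac12\left(\frac{D}{\pi^d}\right)^{s}\Gamma^{r_1+r_2}\!\left(\tfrac s2\right)\Gamma^{r_1+r_2}\!\left(\tfrac{s+z}2\right)\Gamma^{r_2}\!\left(\tfrac{s+1}2\right)\Gamma^{r_2}\!\left(\tfrac{s+z+1}2\right).
\]

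Finally, I would apply Legendre's duplication formula \eqref{Legendre's Duplication Formula} to the $r_2$ paired factors:
\[
\Gamma\!\left(\tfrac s2\right)\Gamma\!\left(\tfrac{s+1}2\right)=2^{1-s}\sqrt\pi\,\Gamma(s),\qquad \Gamma\!\left(\tfrac{s+z}2\right)\Gamma\!\left(\tfrac{s+z+1}2\right)=2^{1-s-z}\sqrt\pi\,\Gamma(s+z).
\]
Each pair therefore contributes $4\pi\cdot 4^{-s}2^{-z}\,\Gamma(s)\Gamma(s+z)$. The integral then equals
\[
\frac{(4\pi)^{r_2}}{2^{\,r_2z+1}}\left(\frac{D}{4^{r_2}\pi^d}\right)^{s}\Gamma^{r_1}\!\left(\tfrac s2\right)\Gamma^{r_1}\!\left(\tfrac{s+z}2\right)\Gamma^{r_2}(s)\Gamma^{r_2}(s+z).
\]
Multiplying by $2^{r_2z+1}/(4\pi)^{r_2}$ yields the claimed identity.
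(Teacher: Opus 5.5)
Your proposal is correct and uses the same ingredients as the paper's proof: Mellin inversion for the Steen function, the substitution $y=(\pi^d x/D)^2$ (equivalently $w=s/2$), and Legendre's duplication formula applied to the $r_2$ paired gamma factors. The only differences are the order and the level of rigour: the paper applies duplication and the change of variables inside the inverse Mellin integral defining $V_{\mathbb{F},z}$ and reads off the Mellin pair at the end, whereas you invoke inversion first and substitute in the forward integral, and your convergence remarks supply justification the paper omits (near $y=0$ no contour shift is even needed, since the defining integral on $\Re(w)=c$ directly gives $V_{\mathbb{F},z}(y)=O(y^{-c})$ for any admissible $c$; if you do shift, it is past the rightmost poles at $w=0$ and $w=-z/2$, not the leftmost).
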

\begin{proof}
From \eqref{Short notation} and the definition \eqref{Steen fn} of the Steen function, we have
\begin{align*}
V_{\mathbb{F},z}\left(x\right)
&= \frac{1}{2\pi i}\int\limits_{(c)}\left[\Gamma(s)\Gamma\left(s+\frac{z}{2}\right)\right]^{r_1+r_2}\!\left[\Gamma\left(s+\frac{1}{2}\right)\Gamma\left(s+\frac{1+z}{2}\right)\right]^{r_2}\!x^{-s} \textrm{d}s,
\end{align*}
where $c > \max\left\{0, -\frac{\Re(z)}{2}\right\}$.
We apply the duplication formula \eqref{Legendre's Duplication Formula} for the gamma function in the last equation and subsequently perform a change of variable $s \mapsto \frac{s}{2}$ to arrive at
\begin{align*}
V_{\mathbb{F},z}\left(x\right)
&= \frac{(4\pi)^{r_2}}{2^{r_2 z} 2\pi i}{\displaystyle\int\limits_{(c)}\!\left[\Gamma(s)\Gamma\left(s+\frac{z}{2}\right)\right]^{r_1}\!\left[\Gamma(2s)\Gamma(2s+z)\right]^{r_2}(4^{2r_2}x)^{-s}\textrm{d}s}\\
&= \frac{(4\pi)^{r_2}}{2^{r_2 z + 1} 2\pi i}{\displaystyle\int\limits_{(2c)}\!\Gamma^{r_1}\left(\frac{s}{2}\right)\Gamma^{r_1}\left(\frac{s+z}{2}\right)\Gamma^{r_2}(s)\Gamma^{r_2}(s+z)(4^{r_2}\sqrt{x})^{-s}\textrm{d}s}.
\end{align*}
We now substitute $x = \left(\frac{\pi^{d} x}{D}\right)^2$ to have
\begin{align}\label{Inverse Mellin}
V_{\mathbb{F},z}\left(\left(\frac{\pi^{d} x}{D}\right)^2\right)
&= \frac{(4\pi)^{r_2}}{2^{r_2 z + 1}2\pi i}{\displaystyle\int\limits_{(2c)}\!\left(\frac{D}{4^{r_2}\pi^{d}}\right)^{s}\Gamma^{r_1}\left(\frac{s}{2}\right)\Gamma^{r_1}\left(\frac{s+z}{2}\right)\Gamma^{r_2}(s)\Gamma^{r_2}(s+z)x^{-s}\textrm{d}s}.
\end{align}
Hence, the Mellin transform for the product of the gamma factors in the integrand above is given by
\begin{align*}
\left(\frac{D}{4^{r_2}\pi^{d}}\right)^{s}\Gamma^{r_1}\left(\frac{s}{2}\right)\Gamma^{r_1}\left(\frac{s+z}{2}\right)\Gamma^{r_2}(s)\Gamma^{r_2}(s+z)
&= \frac{2^{r_2 z + 1}}{(4\pi)^{r_2}}\int\limits_{0}^\infty x^{s-1}V_{\mathbb{F},z}\left(\left(\frac{\pi^{d} x}{D}\right)^2\right) \mathrm{d}x.
\end{align*}
\end{proof}
We have another important result.
\begin{lemma}\label{Lemma reln bw V_z and V_-z}
For $|\Arg(x)| < \frac{\pi d}{2}$, we have
\begin{align}
V_{\mathbb{F},z}\left(x^2\right)
= x^{z}V_{\mathbb{F},-z}\left(x^2\right), \label{V_z and V_-z}
\end{align}
where $V_{\mathbb{F},z}(x)$ is defined in \eqref{Short notation}.
\end{lemma}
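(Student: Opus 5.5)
The identity follows from a shift of the integration variable in the Mellin--Barnes integral \eqref{Steen fn} defining $V_{\mathbb{F},z}$. The point is that the parameter multiset
\[
\Big\{(0)_{r_1+r_2},\ \left(\tfrac{z}{2}\right)_{r_1+r_2},\ \left(\tfrac12\right)_{r_2},\ \left(\tfrac{1+z}{2}\right)_{r_2}\Big\}
\]
is just the multiset for $-z$ translated by $\frac{z}{2}$. Indeed, adding $\frac{z}{2}$ to the parameters of $V_{\mathbb{F},-z}$, namely
\[
\Big\{(0)_{r_1+r_2},\ \left(-\tfrac{z}{2}\right)_{r_1+r_2},\ \left(\tfrac12\right)_{r_2},\ \left(\tfrac{1-z}{2}\right)_{r_2}\Big\},
\]
gives
\[
\Big\{\left(\tfrac z2\right)_{r_1+r_2},\ (0)_{r_1+r_2},\ \left(\tfrac{1+z}{2}\right)_{r_2},\ \left(\tfrac12\right)_{r_2}\Big\},
\]
which is the same multiset as for $V_{\mathbb{F},z}$. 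Since the integrand in \eqref{Steen fn} is a product over the parameters, it does not depend on their order.

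First I will write
\[
V_{\mathbb{F},z}(y)=\frac{1}{2\pi i}\int_{(c)}\prod_j\Gamma(s+a_j)\,y^{-s}\,\mathrm{d}s,
\qquad c>\max\{0,-\Re(z)/2\},
\]
with $a_j$ running over the parameters of $V_{\mathbb{F},z}$. By the multiset identity, $\prod_j\Gamma(s+a_j)=\prod_j\Gamma\!\left(s+\frac z2+b_j\right)$, where $b_j$ runs over the parameters of $V_{\mathbb{F},-z}$.

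Next I will substitute $w=s+\frac z2$. The line $\Re(s)=c$ becomes $\Re(w)=c+\Re(z)/2$. This line lies to the right of all poles of $\Gamma(w+b_j)$, because those poles sit at $\Re(w)\le\max\{0,\Re(z)/2\}$ and $c+\Re(z)/2>\max\{\Re(z)/2,0\}$. So no contour shift past poles is needed. We also have $y^{-s}=y^{-w}\,y^{z/2}$, which gives
\[
V_{\mathbb{F},z}(y)=y^{z/2}\,V_{\mathbb{F},-z}(y).
\]

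Finally I will put $y=x^2$. The only point needing care is the branch: I have to justify that $(x^2)^{z/2}=x^z$ and that $(x^2)^{-w}=x^{-2w}$ inside the integral. Here I read $V_{\mathbb{F},z}(x^2)$ as the integral with $x^{-2s}$, defined with principal $\log x$. This is the interpretation consistent with the hypothesis $|\Arg(x)|<\pi d/2$, i.e.\ $|\Arg(x^2)|<\pi d$, which may exceed $\pi$. With that reading, the integral converges absolutely by Stirling: the integrand decays like $e^{-\pi d|t|}$ against the growth $e^{2|\Arg x||t|}$. The factor then comes out as $x^{-2s}=x^{-2w}x^{z}$, giving \eqref{V_z and V_-z}. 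I expect this branch and convergence bookkeeping to be the only delicate step. Everything else is a direct change of variables.
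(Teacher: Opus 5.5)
Your proposal is correct and is essentially the paper's proof: the paper also writes $V_{\mathbb{F},z}(x^2)$ as the Mellin--Barnes integral with $x^{-2s}$, substitutes $s\mapsto s-\frac{z}{2}$ so that the line moves to $\Re(s)=c+\frac{\Re(z)}{2}$, and reads off $x^{z}V_{\mathbb{F},-z}(x^2)$. The paper leaves two points implicit that you check explicitly: that the shifted line still lies to the right of all poles, and that $x^{-2s}$ must be taken with the principal $\log x$ (since $|\Arg(x^2)|$ may exceed $\pi$); these make your version slightly more complete.
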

\begin{proof}
For $c > \max\left\{0, -\frac{\Re(z)}{2}\right\}$ and $|\Arg(x)| < \frac{\pi d}{2}$, we consider
\begin{align*}
V_{\mathbb{F},z}\left(x^2\right)
= \frac{1}{2\pi i}\int\limits_{(c)}\left[\Gamma(s)\Gamma\left(s+\frac{z}{2}\right)\right]^{r_1+r_2}\!\left[\Gamma\left(s+\frac{1}{2}\right)\Gamma\left(s+\frac{1+z}{2}\right)\right]^{r_2}\!x^{-2s} \textrm{d}s.
\end{align*}
Changing the variable $s$ to $s-\frac{z}{2}$, we have
\begin{align*}
V_{\mathbb{F},z}\left(x^2\right) 
&= \frac{1}{2\pi i}\int\limits_{\left(c+\frac{z}{2}\right)}\left[\Gamma\left(s-\frac{z}{2}\right)\Gamma(s)\right]^{r_1+r_2}\!\left[\Gamma\left(s+\frac{1-z}{2}\right)\Gamma\left(s+\frac{1}{2}\right)\right]^{r_2}x^{-2s+z} \textrm{d}s \notag \\
&= x^{z}V_{\mathbb{F},-z}\left(x^2\right).
\end{align*}
\end{proof}
We now prove the main results of the paper in the next section.
\section{{\bf Proofs of Main Results}}
\begin{proof}[Theorem \rm{\ref{Main thm 1}}][]
Let
\begin{align}\label{Defn of S}
S_{\mathbb{F},z}(x) &:= \sum_{n=1}^{\infty}\sigma_{\mathbb{F}, -z}(n)V_{\mathbb{F},z}\left(\left(\frac{\pi^{d} n x}{D}\right)^2\right),  \quad \mathrm{for} \,\,\, |\Arg(x)| < \frac{\pi d}{2}.
\end{align}
We use the following bound for the Steen function:
\begin{align*}
V_{\mathbb{F},z}(x) \ll_d x^{\frac{1}{2d}\left(\frac{1-d\Re(z)}{2} - r_1 - r_2\right)}\exp{\left(-2dx^{\frac{1}{2d}}\right)},
\end{align*}
as $|x| \rightarrow \infty$, which follows from the asymptotic estimate of the Meijer $G$-function \cite[p.~180]{Luke}. 
Combining this with the bound \eqref{sigma bound} for $\sigma_{\mathbb{F},z}(n)$, one can see the absolute convergence of $S_{\mathbb{F},z}(x)$. Moreover, as $|x| \rightarrow \infty$, we have
\begin{align*}
S_{\mathbb{F},z}(x) \ll_{d, D} x^{\frac{1}{d}\left(\frac{1-d\Re(z)}{2} - r_1 - r_2\right)}\exp{\left(-2\pi d \left(\frac{x}{D}\right)^{\frac{1}{d}}\right)}.
\end{align*}
We use \eqref{Dirichlet series for generalized divisor function} and \eqref{Inverse Mellin} to rewrite $S_{\mathbb{F},z}(x)$ in the integral form as
\begin{align*}
S_{\mathbb{F},z}(x) = \frac{(4\pi)^{r_2}}{2^{r_2 z + 1}}\sum_{n=1}^{\infty}\frac{\sigma_{\mathbb{F}, -z}(n)}{2\pi i}{\displaystyle\int\limits_{(c)}\!\Gamma^{r_1}\left(\frac{s}{2}\right)\Gamma^{r_1}\left(\frac{s+z}{2}\right)\Gamma^{r_2}(s)\Gamma^{r_2}(s+z)\left(\frac{D}{4^{r_2}\pi^{d}nx}\right)^{s}\textrm{d}s} \notag \\
= \frac{(4\pi)^{r_2}}{2^{r_2 z + 1}2\pi i}\int\limits_{(c)}\zeta_{\mathbb{F}}(s)\zeta_{\mathbb{F}}(s+z)\Gamma^{r_1}\left(\frac{s}{2}\right)\Gamma^{r_1}\left(\frac{s+z}{2}\right)\Gamma^{r_2}(s)\Gamma^{r_2}(s+z)\left(\frac{D}{4^{r_2}\pi^{d} x}\right)^{s} \textrm{d}s.
\end{align*}
Assuming $c>\max\left\{1, 1-\Re(z)\right\}$, we justify the interchange of the series and the integral in the final step. We use \eqref{Dedekind_zeta_functional_equation} to arrive at
\begin{align}
S_{\mathbb{F},z}(x) &= \frac{(4\pi)^{r_2}}{2^{r_2 z + 1} 2\pi i}{\displaystyle\int\limits_{(c)}\left(\frac{D}{4^{r_2}\pi^d}\right)^{-\frac{z}{2}}\Omega_{\mathbb{F}}(s)\Omega_{\mathbb{F}}(s+z)x^{-s} \textrm{d}s} \nonumber \\
&= \left(\frac{\pi^d}{D}\right)^{\frac{z}{2}}\frac{(4\pi)^{r_2}}{2\times 2\pi i}\int_{(c)}\Omega_{\mathbb{F}}(s)\Omega_{\mathbb{F}}(s+z)x^{-s}\textrm{d}s. \label{S in integral}
\end{align}
We consider $\mathcal{C}$ to be the rectangular contour with vertices $c -iT, c+iT, \alpha + iT$, and $\alpha - iT$ traversed in the counter-clockwise direction. We choose $\alpha$ such that $\min\{-1, -1 \pm \Re(z)\} < \alpha < \min\{0, \pm \Re(z)\}$ and $c$ is assumed to be fixed as before. Therefore, by Cauchy's residue theorem, we have
\begin{align*}
\frac{1}{2\pi i}\int_{\mathcal{C}}\Omega_{\mathbb{F}}(s)\Omega_{\mathbb{F}}(s+z)x^{-s}\textrm{d}s &= \mathfrak{R}_{\mathbb{F}, z}(x), 
\end{align*}
where $\mathfrak{R}_{\mathbb{F}, z}(x)$ is the sum of residues of the integrand function inside the contour $\mathcal{C}$. As $T \rightarrow \infty$, using the Stirling's formula for $\Gamma(s)$, one can see that the horizontal integrals vanish. Thus, we are left with
\begin{align}
\frac{1}{2\pi i}\int_{(c)}\Omega_{\mathbb{F}}(s)\Omega_{\mathbb{F}}(s+z)x^{-s}\textrm{d}s &= \mathfrak{R}_{\mathbb{F}, z}(x) + \mathcal{J}_{\mathbb{F}, z}(x), \label{Contour integral 1}
\end{align}
where 
$$
\mathcal{J}_{\mathbb{F}, z}(x) := \frac{1}{2\pi i}\int_{(\alpha)}\Omega_{\mathbb{F}}(s)\Omega_{\mathbb{F}}(s+z)x^{-s}\textrm{d}s.
$$
To proceed further, we replace the variable $s$ by $1-s$ in $\mathcal{J}_{\mathbb{F}, z}(x)$, and then use the functional equation \eqref{Dedekind_zeta_functional_equation} for $\zeta_{\mathbb{F}}(s)$ and $\zeta_{\mathbb{F}}(s-z)$ to obtain
\begin{align*}
\mathcal{J}_{\mathbb{F}, z}(x) &= \frac{1}{2\pi i}\int_{(1-\alpha)}\Omega_{\mathbb{F}}(1-s)\Omega_{\mathbb{F}}(1-s+z)x^{-(1-s)}\textrm{d}s \\
&= \frac{1}{2\pi i x}\int_{(1-\alpha)}\Omega_{\mathbb{F}}(s)\Omega_{\mathbb{F}}(s-z)x^{s}\textrm{d}s.
\end{align*}
One can check that for any $z$, $1-\alpha >\max\left\{1, 1+\Re(z)\right\}$. Thus using \eqref{S in integral}, we can write the above integral as follows:
\begin{align*}
\mathcal{J}_{\mathbb{F}, z}(x) &= \frac{2}{x(4\pi)^{r_2}}\left(\frac{\pi^d}{D}\right)^{\frac{z}{2}}S_{\mathbb{F}, -z}\left(\frac{1}{x}\right).
\end{align*}
Substituting the above expression for $\mathcal{J}_{\mathbb{F},z}(x)$ into \eqref{Contour integral 1}, multiplying by $(4\pi)^{r_2}x^{\frac{1-z}{2}}$, and applying \eqref{S in integral}, we obtain
\begin{align}\label{Final eqn without R}
2\sqrt{x}\left(\frac{\pi^d x}{D}\right)^{-\frac{z}{2}}S_{\mathbb{F}, z}(x) = (4\pi)^{r_2}x^{\frac{1-z}{2}}\mathfrak{R}_{\mathbb{F}, z}(x) + \frac{2}{\sqrt{x}} \left(\frac{\pi^d}{Dx}\right)^{\frac{z}{2}}S_{\mathbb{F}, -z}\left(\frac{1}{x}\right).
\end{align}
We now turn our attention to the residual term $\mathfrak{R}_{\mathbb{F}, z}(x)$. Recall that $\zeta_{\mathbb{F}}(s)$ has a simple pole at $s=1$ and a zero of order $r_1+r_2-1$ at $s=0$. Moreover, $\zeta_{\mathbb{F}}(s)$ has
zeros of order $r_1 + r_2$ at negative even integers, and of order $r_2$ at negative odd integers. On the other hand, the gamma function $\Gamma(s)$ has simple poles on non-positive integers. Using these properties, we analyse that the points $s = 0$, $s=1, s= -z$ and $s=1-z$ are all simple poles of the integrand function inside the contour, when $z \neq 0, 1$.
Therefore, the term $\mathfrak{R}_{\mathbb{F}, z}(x)$ in \eqref{Final eqn without R} can be given as
\begin{align*}
\mathfrak{R}_{\mathbb{F}, z}(x) &= 
R_{0}(x,z) + R_{1}(x,z) + R_{1-z}(x,z) + R_{-z}(x,z).
\end{align*}
Here, $R_{a}(x,z)$ denotes the residue of the integrand $\Omega_{\mathbb{F}}(s)\Omega_{\mathbb{F}}(s+z)x^{-s}$ at simple pole $s=a$. First we calculate the residue corresponding to $s = 0$ as follows:
\begin{align*}
R_{0}(x,z) &= \lim_{s \rightarrow 0} s\,\Omega_{\mathbb{F}}(s)\Omega_{\mathbb{F}}(s+z)x^{-s} \\
&= \lim_{s \rightarrow 0} s\,\Omega_{\mathbb{F}}(s)\lim_{s \rightarrow 0}\Omega_{\mathbb{F}}(s+z)x^{-s} \\
&= \Omega_{\mathbb{F}}(z)\lim_{s \rightarrow 0}s^{r_1}\Gamma^{r_1}\left(\frac{s}{2}\right)s^{r_2}\Gamma^{r_2}(s)\frac{\zeta_{\mathbb{F}}(s)}{s^{r_1+r_2-1}} \\
&= 2^{r_1}C_{\mathbb{F}}\Omega_{\mathbb{F}}(z).
\end{align*}
where $C_\mathbb{F}$ is defined in \eqref{Laurent series_at s=0_1st coeff}. Moreover, all the remaining residual terms can be expressed in terms of $R_{0}(x,z)$. Therefore, we have
{\allowdisplaybreaks
\begin{align*}
R_{1}(x,z) &= \lim_{s \rightarrow 1} (s-1)\Omega_{\mathbb{F}}(s)\Omega_{\mathbb{F}}(s+z)x^{-s} \\
&= \lim_{s \rightarrow 1} (s-1)\Omega_{\mathbb{F}}(1-s)\Omega_{\mathbb{F}}(1-s-z)x^{-s}\\
&= -\lim_{s \rightarrow 0} s\,\Omega_{\mathbb{F}}(s)\Omega_{\mathbb{F}}(s-z)x^{s-1} \\
&= -\frac{1}{x}R_{0}\left(\frac{1}{x},-z\right), \\
R_{1-z}(x,z) &= \lim_{s \rightarrow 1-z}(s-1+z)\Omega_{\mathbb{F}}(s)\Omega_{\mathbb{F}}(s+z)x^{-s} \\
&= \lim_{s \rightarrow 1-z}(s-1+z)\Omega_{\mathbb{F}}(1-s)\Omega_{\mathbb{F}}(1-s-z)x^{-s}\\
&= -\lim_{s \rightarrow 0}s\,\Omega_{\mathbb{F}}(s+z)\Omega_{\mathbb{F}}(s)x^{s+z-1}\\
&= -x^{z-1}R_{0}\left(\frac{1}{x}, z\right), \\
R_{-z}(x,z) &= \lim_{s \rightarrow -z} (s+z)\Omega_{\mathbb{F}}(s)\Omega_{\mathbb{F}}(s+z)x^{-s} \\
&= \lim_{s \rightarrow 0} s\,\Omega_{\mathbb{F}}(s-z)\Omega_{\mathbb{F}}(s)x^{-s+z} \\
&= x^zR_{0}(x, -z).
\end{align*}}
Hence, the residual term $\mathfrak{R}_{\mathbb{F}, z}(x)$ becomes
\begin{align}
\mathfrak{R}_{\mathbb{F}, z}(x) &= R_{0}(x,z) + R_{-z}(x,z) + R_{1}(x,z) + R_{1-z}(x,z) \notag\\
&= R_{0}(x,z) + x^zR_{0}(x, -z) - \frac{1}{x}\left[R_{0}\left(\frac{1}{x},-z\right) + x^zR_{0}\left(\frac{1}{x}, z\right) \right] \notag\\
&= 2^{r_1}C_{\mathbb{F}}\Omega_{\mathbb{F}}(z) + x^z 2^{r_1}C_{\mathbb{F}}\Omega_{\mathbb{F}}(-z) - \frac{1}{x}\left[2^{r_1}C_{\mathbb{F}}\Omega_{\mathbb{F}}(-z) + x^z 2^{r_1}C_{\mathbb{F}}\Omega_{\mathbb{F}}(z) \right] \notag \\
&= 2^{r_1}C_{\mathbb{F}}\left[\Omega_{\mathbb{F}}(z)\left(1 - x^{z-1} \right) + \Omega_{\mathbb{F}}(-z)\left(x^{z} - x^{-1} \right)\right]. \label{Residual term}
\end{align}
Therefore, using \eqref{Residual term} in \eqref{Final eqn without R}, we finally have
\begin{align}
\left(\frac{\pi^d x}{D}\right)^{-\frac{z}{2}}&2\sqrt{x}S_{\mathbb{F}, z}(x) + 2^d\pi^{r_2}C_{\mathbb{F}}\Omega_{\mathbb{F}}(z)\left[x^{\frac{z-1}{2}} - x^{\frac{1-z}{2}} \right]  \notag \\
&= \frac{2}{\sqrt{x}}\left(\frac{\pi^d}{Dx}\right)^{\frac{z}{2}}S_{\mathbb{F}, -z}\left(\frac{1}{x}\right) + 2^d\pi^{r_2}C_{\mathbb{F}}\Omega_{\mathbb{F}}(-z)\left[x^{\frac{z+1}{2}} - x^{-\left(\frac{z+1}{2}\right)} \right]. \label{For S(1/x)}
\end{align}
Note that $z=0$ is a pole of the both sides of the above equation. Also, the left hand side of \eqref{For S(1/x)} has a removable singularity at $z=1$. Thus, by analytic continuation, we have the required identity \eqref{main eqn}, where we assume
\begin{align*}
\mathcal{I}_{\mathbb{F}, z}(x) = \left(\frac{\pi^d x}{D}\right)^{-\frac{z}{2}}2\sqrt{x}S_{\mathbb{F}, z}(x) + 2^d\pi^{r_2}C_{\mathbb{F}}\Omega_{\mathbb{F}}(z)\left[x^{\frac{z-1}{2}} - x^{\frac{1-z}{2}} \right].
\end{align*}
\end{proof}

\begin{proof}[Theorem \rm{\ref{Converse of main thm}}][]
We use Lemma~\ref{Lemma for Mellin trans} to write the following Mellin transform:
\begin{align*}
\left(\frac{D}{4^{r_2}\pi^{d}}\right)^{s}\Gamma^{r_1}\left(\frac{s}{2}\right)\Gamma^{r_1}\left(\frac{s+z}{2}\right)\Gamma^{r_2}(s)\Gamma^{r_2}(s+z)
&= \frac{2^{r_2 z + 1}}{(4\pi)^{r_2}}\int\limits_{0}^\infty t^{s-1}V_{\mathbb{F},z}\left(\left(\frac{\pi^{d} t}{D}\right)^2\right) \mathrm{d}x.
\end{align*}
Substituting $t = nx$, we get
\begin{align}\label{Change of variable for converse}
\left(\frac{D}{4^{r_2}\pi^{d}}\right)^{s}\Gamma^{r_1}\left(\frac{s}{2}\right)\Gamma^{r_1}\left(\frac{s+z}{2}\right)\Gamma^{r_2}(s)\Gamma^{r_2}(s+z)n^{-s} \notag \\
= \frac{2^{zr_2 + 1}}{(4\pi)^{r_2}}\int_0^\infty x^{s-1}V_{\mathbb{F},z}\left(\left(\frac{\pi^{d} nx}{D}\right)^2\right)\, \text{d}x.
\end{align}
We now consider $\Re(s) > \{1, 1-\Re(z)\}$. Multiplying \eqref{Change of variable for converse} by $\sigma_{\mathbb{F},-z}(n)\left(\frac{D}{4^{r_2}\pi^d}\right)^{\frac{z}{2}}$ and summing over $n$ to have $\Omega_{\mathbb{F}}(s)\Omega_{\mathbb{F}}(s+z)$ on the left hand side, we use the definition \eqref{Defn of S} of $S_{\mathbb{F},-z}(x)$ to arrive at
\begin{align}
\Omega_{\mathbb{F}}(s)\Omega_{\mathbb{F}}(s+z) &= \frac{2}{(4\pi)^{r_2}}\left(\frac{D}{\pi^d}\right)^{\frac{z}{2}}\int_0^\infty x^{s-1}\sum_{n=1}^{\infty}\sigma_{\mathbb{F},-z}(n)V_{\mathbb{F},z}\left(\left(\frac{\pi^{d} nx}{D}\right)^2\right)\, \text{d}x \notag\\
&= \frac{2}{(4\pi)^{r_2}}\left(\frac{D}{\pi^d}\right)^{\frac{z}{2}}\int_0^\infty x^{s-1}S_{\mathbb{F}, z}(x) \, \text{d}x \notag\\
&= \frac{2}{(4\pi)^{r_2}}\left(\frac{D}{\pi^d}\right)^{\frac{z}{2}}\left[\int_0^1 x^{s-1}S_{\mathbb{F}, z}(x) \text{d}x + \int_1^\infty x^{s-1}S_{\mathbb{F}, z}(x)\text{d}x \right] \notag\\
&= \frac{2}{(4\pi)^{r_2}}\left(\frac{D}{\pi^d}\right)^{\frac{z}{2}}\left[\int_1^\infty x^{-s-1}S_{\mathbb{F}, z}\left(\frac{1}{x}\right) \text{d}x + \int_1^\infty x^{s-1}S_{\mathbb{F}, z}(x) \text{d}x \right]. \label{Without S(1/x)}
\end{align}
We use \eqref{For S(1/x)} for the value of $S_{\mathbb{F}, -z}\left(\frac{1}{x} \right)$ and then replace $z$ by $-z$ to have
\begin{align*}
S_{\mathbb{F}, z}\left(\frac{1}{x} \right)
= \left(\frac{\pi^d}{D}\right)^{z}\!\!xS_{\mathbb{F}, -z}(x) + 2^{d-1}\pi^{r_2}\!C_{\mathbb{F}}\left(\frac{\pi^d}{D}\right)^{\frac{z}{2}}\bigg[\Omega_{\mathbb{F}}(-z)\left[x^{-z} - x \right] 
- \Omega_{\mathbb{F}}(z) \left[x^{1-z} - 1 \right]\bigg].
\end{align*}
Inserting the above value of $S_{\mathbb{F}, z}\left(\frac{1}{x} \right)$ in \eqref{Without S(1/x)}, we obtain
\begin{align}
\Omega_{\mathbb{F}}(s)&\Omega_{\mathbb{F}}(s+z) = 2^{r_1}C_{\mathbb{F}}\int_1^\infty x^{-s-1}\left\{\Omega_{\mathbb{F}}(-z)\left[x^{-z} - x \right] - \Omega_{\mathbb{F}}(z) \left[x^{1-z} - 1 \right]\right\}\text{d}x \nonumber \\
&+ \frac{2}{(4\pi)^{r_2}}\int_1^\infty \left\{ x^{-s}\left(\frac{D}{\pi^d}\right)^{-\frac{z}{2}}S_{\mathbb{F}, -z}(x) + x^{s-1}\left(\frac{D}{\pi^d}\right)^{\frac{z}{2}} S_{\mathbb{F}, z}(x) \right\} \text{d}x.
\label{Symmetry of funl eqn general for z}
\end{align}
It can be easily seen that the second integral in \eqref{Symmetry of funl eqn general for z} is symmetric under the substitutions $s \mapsto 1-s$ and $z \mapsto -z$. We denote the first integral in \eqref{Symmetry of funl eqn general for z} by 
\begin{align*}
K(s, z) &:= \int_1^\infty \Omega_{\mathbb{F}}(-z)\left[x^{-s-1-z} - x^{-s} \right] - \Omega_{\mathbb{F}}(z)\left[x^{-s-z} - x^{-s-1} \right] \text{d}x \\
&= \frac{\Omega_{\mathbb{F}}(-z)}{s+z} + \frac{\Omega_{\mathbb{F}}(-z)}{1-s} + \frac{\Omega_{\mathbb{F}}(z)}{1-s-z} + \frac{\Omega_{\mathbb{F}}(z)}{s} \\
&= K(1-s, -z).
\end{align*}
Therefore, using the above relation in \eqref{Symmetry of funl eqn general for z}, one arrives at the functional $\Omega_{\mathbb{F}}(s)\Omega_{\mathbb{F}}(s+z) = \Omega_{\mathbb{F}}(1-s)\Omega_{\mathbb{F}}(1-s-z)$. This completes the proof.
\end{proof}

\begin{proof}[Corollary \rm{\ref{Guinand from main thm}}][]
When $\mathbb{F} = \mathbb{Q}$, we have $D = 1, r_1 = 1, r_2 = 0$ and  $\sigma_{\mathbb{Q}, -z}(n) = \sigma_{-z}(n)$. We employ \eqref{Bessel fn formula} in \eqref{Defn of I_F} to have
\begin{align}
\mathcal{I}_{\mathbb{Q}, z}(x) &= (\pi x)^{-\frac{z}{2}}2\sqrt{x}\sum_{n=1}^{\infty}\sigma_{\mathbb{Q}, -z}(n)V\left((\pi n x)^2\bigg| 0, \frac{z}{2}\right) - \Omega_{\mathbb{Q}}(z)\left[x^{\frac{z-1}{2}} - x^{\frac{1-z}{2}}\right] \notag \\
&= 4(\pi x)^{-\frac{z}{2}}\sqrt{x}\sum_{n=1}^{\infty}\sigma_{-z}(n)(\pi n x)^{\frac{z}{2}}K_{\frac{z}{2}}(2\pi nx) + \Omega(z)\left[x^{\frac{1-z}{2}} - x^{\frac{z-1}{2}}\right]  \nonumber \\
&= 4\sqrt{x}\sum_{n=1}^{\infty}\sigma_{-z}(n)n^{\frac{z}{2}}K_{\frac{z}{2}}(2\pi nx) + \Omega(z)\left[x^{\frac{1-z}{2}} - x^{\frac{z-1}{2}}\right].\label{First sum_RG}
\end{align}
Using \eqref{First sum_RG} in \eqref{main eqn}, we get the Ramanujan-Guinand identity \eqref{RG id}.
\end{proof}

\begin{proof}[Corollary \rm{\ref{Case z=0 F}}][]
Putting $z=0$ in \eqref{main eqn}, we have
\begin{align*}
&\mathcal{I}_{\mathbb{F}, 0}(x) = \mathcal{I}_{\mathbb{F}, 0}\left(\frac{1}{x}\right).
\end{align*}
We take the series part of $\mathcal{I}_{\mathbb{F}, 0}(x)$ (as defined in 
\eqref{Defn of I_F}) and $\mathcal{I}_{\mathbb{F}, 0}\left(\frac{1}{x}\right)$ on one side of the above equation to have
\begin{align}
&2\sqrt{x}\sum_{n=1}^{\infty}\sigma_{\mathbb{F}, 0}(n)V_{\mathbb{F},0}\left(\left(\frac{\pi^{d} n x}{D}\right)^2\right) - \frac{2}{\sqrt{x}}\sum_{n=1}^{\infty}\sigma_{\mathbb{F}, 0}(n)V_{\mathbb{F},0}\left(\left(\frac{\pi^{d} x}{Dx}\right)^2\right)
 \notag \\
&= 2^d\pi^{r_2}C_{\mathbb{F}}\lim_{z \rightarrow 0}\Omega_{\mathbb{F}}(-z)\left[x^{\frac{z+1}{2}} - x^{\frac{-1-z}{2}}\right] - \Omega_{\mathbb{F}}(z)\left[x^{\frac{z-1}{2}} - x^{\frac{1-z}{2}}\right]. \label{IF0 series}
\end{align}
{\allowdisplaybreaks
To calculate the limit on the right hand side of \eqref{IF0 series}, we use the series expansions \eqref{exp series} and \eqref{Omega series} to have
\begin{align*}
&\lim_{z \rightarrow 0}\Omega_{\mathbb{F}}(-z)\left[x^{\frac{z+1}{2}} - x^{\frac{-1-z}{2}}\right] - \Omega_{\mathbb{F}}(z)\left[x^{\frac{z-1}{2}} - x^{\frac{1-z}{2}}\right] \notag \\
&= \lim_{z \to 0}\left\{-\frac{2^{r_1}C_\mathbb{F}}{z} + \left[2^{r_1}a_1 - 2^{r_1-1}d\gamma C_\mathbb{F} + 2^{r_1-1}C_\mathbb{F}\log\left(\frac{D}{4^{r_2}\pi^d}\right)\right] + O(z)\right\}\notag \\
&\times \left\{\left(\sqrt{x}-\frac{1}{\sqrt{x}}\right)+ \frac{z}{2}\log(x)\left(\frac{1}{\sqrt{x}}+\sqrt{x}\right)+ \cdots\right\}\notag \\
&- \left\{\frac{2^{r_1}C_\mathbb{F}}{z} + \left[2^{r_1}a_1 - 2^{r_1-1}d\gamma C_\mathbb{F} + 2^{r_1-1}C_\mathbb{F}\log\left(\frac{D}{4^{r_2}\pi^d}\right)\right] + O(z)\right\}\notag \\
&\times \left\{\left(\frac{1}{\sqrt{x}}-\sqrt{x}\right)+ \frac{z}{2}\log(x)\left(\frac{1}{\sqrt{x}}+\sqrt{x}\right)+\cdots \right\} \notag \\
&= 2\left(\sqrt{x}-\frac{1}{\sqrt{x}}\right)\left[2^{r_1}a_1 - 2^{r_1-1}d\gamma C_\mathbb{F} + 2^{r_1-1}C_\mathbb{F}\log\left(\frac{D}{4^{r_2}\pi^d}\right)\right] \notag\\
&- 2^{r_1}C_\mathbb{F}\log(x)\left(\frac{1}{\sqrt{x}}+\sqrt{x}\right) \notag \\
&= \frac{2^{r_1}}{\sqrt{x}}\left[d\gamma C_\mathbb{F} - 2a_1  - C_\mathbb{F}\log\left(\frac{Dx}{4^{r_2}\pi^d}\right)\right] - 2^{r_1}\sqrt{x}\left[d\gamma C_\mathbb{F} - 2a_1 - C_\mathbb{F}\log\left(\frac{D}{4^{r_2}\pi^d x}\right)\right].
\end{align*}}
Using the above value in \eqref{IF0 series}, we have the desired result.
\end{proof}

\begin{proof}[Corollary \rm{\ref{Case Koshliakov id}}][]
When $\mathbb{F} = \mathbb{Q}$, we have $D = 1, r_1 = 1, r_2 = 0, C_\mathbb{Q} = -\frac{1}{2}, a_1 =  \frac{\zeta^{(r_1+r_2)}_{\mathbb{F}}(0)}{(r_1+r_2)!} =  \zeta'(0) = -\frac{1}{2}\log(2\pi)$. So, we get
\begin{align*}
\mathcal{I}_{\mathbb{Q}, 0}(x) &= 2\sqrt{x}\sum_{n=1}^{\infty}\sigma_{\mathbb{Q}, 0}(n)V\left(\left(\pi n x\right)^2\bigg| 0, 0\right) - 2\sqrt{x}\left[-\frac{\gamma}{2} + \log(2\pi) + \frac{1}{2}\log\left(\frac{1}{\pi x}\right)\right] \notag \\
&= 4\sqrt{x}\sum_{n=1}^{\infty}d(n)K_0(2\pi n x) + \sqrt{x}\left[\gamma - \log\left(\frac{4\pi}{x}\right)\right].
\end{align*}
Substituting the above expression in Theorem \ref{Case z=0 F} gives the Ramanujan-Koshliakov identity \eqref{Koshliakov Formula}.
\end{proof}

\begin{proof}[Theorem \rm{\ref{Case z=1 F}}][]
Taking $z=1$ in Theorem \ref{Main thm 1}, we have
\begin{align*}
\mathcal{I}_{\mathbb{F}, 1}(x) = \mathcal{I}_{\mathbb{F}, -1}\left(\frac{1}{x}\right),
\end{align*}
where
\begin{align}
\mathcal{I}_{\mathbb{F}, 1}(x) &= 2\sqrt{\frac{D}{\pi^d}}\sum_{n=1}^{\infty}\sigma_{\mathbb{F}, -1}(n)V_{\mathbb{F}, 1}\left(\left(\frac{\pi^{d} n x}{D}\right)^2\right) + \lim_{z \rightarrow 1}2^d\pi^{r_2}C_{\mathbb{F}}\Omega_{\mathbb{F}}(z)\left[x^{\frac{z-1}{2}} - x^{\frac{1-z}{2}}\right]. \label{IF1}
\end{align}
We now calculate the above limit as follows:
{\allowdisplaybreaks
\begin{align}
&\lim_{z \rightarrow 1}2^d\pi^{r_2}C_{\mathbb{F}}\Omega_{\mathbb{F}}(z)\left[x^{\frac{z-1}{2}} - x^{\frac{1-z}{2}}\right] \notag \\
&= 2^d\pi^{r_2}C_{\mathbb{F}}\lim_{z \rightarrow 1}\left(\frac{D}{4^{r_2}\pi^d}\right)^{\frac{z}{2}}\Gamma^{r_1}\left(\frac{z}{2}\right)\Gamma^{r_2}(z)\zeta_{\mathbb{F}}(z)\left[x^{\frac{z-1}{2}} - x^{\frac{1-z}{2}}\right] \notag\\
&= 2^d\pi^{r_2}C_{\mathbb{F}}\left(\frac{D}{4^{r_2}\pi^d}\right)^{\frac{1}{2}}\Gamma^{r_1}\left(\frac{1}{2}\right)\Gamma^{r_2}(1)\lim_{z \rightarrow 1}\zeta_{\mathbb{F}}(z)\left[x^{\frac{z-1}{2}} - x^{\frac{1-z}{2}}\right] \notag\\
&= 2^{r_1+r_2}C_{\mathbb{F}}\sqrt{D}\lim_{z \rightarrow 1}\zeta_{\mathbb{F}}(z)\left[x^{\frac{z-1}{2}} - x^{\frac{1-z}{2}}\right]. \label{limit before series}
\end{align}}
To proceed further, we need the series expansion
\begin{align}\label{exp bracket around 1}
x^{\frac{z-1}{2}} - x^{\frac{1-z}{2}} &= \log(x)(z-1) + O((z-1)^2).
\end{align}
Hence, using \eqref{zeta around 1} and \eqref{exp bracket around 1} in \eqref{limit before series}, we have
\begin{align}
\lim_{z \rightarrow 1}2^d\pi^{r_2}C_{\mathbb{F}}\Omega_{\mathbb{F}}(z)\left[x^{\frac{z-1}{2}} - x^{\frac{1-z}{2}}\right] = 2^{r_1+r_2}\sqrt{D}C_{\mathbb{F}}H_{\mathbb{F}}\log(x). \label{limit value at 1}
\end{align}
Thus, utilizing \eqref{limit value at 1} and the fact that $\sigma_{\mathbb{F}, 1}(n) = n\sigma_{\mathbb{F}, -1}(n)$, in \eqref{IF1}, one gets
\begin{align}
\mathcal{I}_{\mathbb{F}, 1}(x) &= 2\sqrt{\frac{D}{\pi^d}}\sum_{n=1}^{\infty}\frac{\sigma_{\mathbb{F}, 1}(n)}{n}V_{\mathbb{F}, 1}\left(\left(\frac{\pi^{d} n x}{D}\right)^2\right) + 2^{r_1+r_2}\sqrt{D}C_{\mathbb{F}}H_{\mathbb{F}}\log(x). \label{IF1(x)}
\end{align}
We now find the value of $\mathcal{I}_{\mathbb{F}, -1}\left(\frac{1}{x}\right)$ as follows:
\begin{align}
\mathcal{I}_{\mathbb{F}, -1}\left(\frac{1}{x}\right) &= \frac{2}{x}\sqrt{\frac{\pi^d}{D}}\sum_{n=1}^{\infty}\sigma_{\mathbb{F}, 1}(n)V_{\mathbb{F}, -1}\left(\left(\frac{\pi^{d} n}{Dx}\right)^2\right) + 2^d\pi^{r_2}C_{\mathbb{F}}\Omega_{\mathbb{F}}(-1)\left(x - \frac{1}{x}\right) \notag \\
&= 2\sqrt{\frac{D}{\pi^d}}\sum_{n=1}^{\infty}\frac{\sigma_{\mathbb{F}, 1}(n)}{n}V_{\mathbb{F}, 1}\left(\left(\frac{\pi^{d} n}{Dx}\right)^2 \right) + \frac{2^{r_1}D}{\pi^{r_1+r_2}}C_{\mathbb{F}}\zeta_{\mathbb{F}}(2)\left(x - \frac{1}{x}\right), \label{IF1(1/x) final}
\end{align}
where in the last step, we used \eqref{Dedekind_zeta_functional_equation}, \eqref{Omega_F of fnal eqn} and \eqref{V_z and V_-z}.
From \eqref{IF1(x)} and \eqref{IF1(1/x) final}, we have
\begin{align}
&2\sqrt{\frac{D}{\pi^d}}\sum_{n=1}^{\infty}\frac{\sigma_{\mathbb{F}, 1}(n)}{n}V_{\mathbb{F}, 1}\left(\left(\frac{\pi^{d} n x}{D}\right)^2\right) + 2^{r_1+r_2}\sqrt{D}C_{\mathbb{F}}H_{\mathbb{F}}\log(x) - \frac{2^{r_1}D}{\pi^{r_1+r_2}}C_{\mathbb{F}}\zeta_{\mathbb{F}}(2)x \nonumber \\
&= 2\sqrt{\frac{D}{\pi^d}}\sum_{n=1}^{\infty}\frac{\sigma_{\mathbb{F}, 1}(n)}{n}V_{\mathbb{F}, 1}\left(\left(\frac{\pi^{d} n}{Dx}\right)^2 \right) - \frac{2^{r_1}D}{\pi^{r_1+r_2}x}C_{\mathbb{F}}\zeta_{\mathbb{F}}(2). \label{Before solving Steen}
\end{align}
Next, we further simplify the Steen function in the series using \eqref{Short notation}, as follows:
\begin{align}
V_{\mathbb{F}, 1}&\left(\left(\frac{\pi^{d} n x}{D}\right)^2\right) = V\left(\left(\frac{\pi^{d} n x}{D}\right)^2\bigg|\left(0\right)_{r_1 + r_2}, \left(\frac{1}{2}\right)_{r_1 + r_2}, \left(\frac{1}{2}\right)_{r_2}, (1)_{r_2}\right) \nonumber \\
&= \frac{1}{2\pi i}\int\limits_{(c)}\left[\Gamma(s)\Gamma\left(s+\frac{1}{2}\right)\right]^{r_1+r_2}\!\left[\Gamma\left(s+\frac{1}{2}\right)\Gamma\left(s+1\right)\right]^{r_2}\!\left(\frac{\pi^{d} n x}{D}\right)^{-2s} \textrm{d}s \nonumber \\
&= \frac{2^{r_1+r_2}\pi^{\frac{d}{2}}}{2\pi i}\int\limits_{(c)}\Gamma^{r_1+r_2}(2s)\Gamma^{r_2}(2s+1)\left(\frac{(2\pi)^{d} n x}{D}\right)^{-2s} \textrm{d}s \nonumber \\
&= \frac{2^{r_1+r_2-1}\pi^{\frac{d}{2}}}{2\pi i}\int\limits_{(2c)}\Gamma^{r_1+r_2}(s)\Gamma^{r_2}(s+1)\left(\frac{(2\pi)^{d} n x}{D}\right)^{-s} \textrm{d}s \nonumber \\
&= 2^{r_1+r_2-1}\pi^{\frac{d}{2}}V\left(\left(\frac{(2\pi)^{d} n x}{D}\right)\bigg|\left(0\right)_{r_1 + r_2}, (1)_{r_2}\right), \quad |\Arg(x)|<\frac{\pi d}{2}.
\end{align}
Substituting $\sqrt{D}H_{\mathbb{F}} = -2^{r_1}(2\pi)^{r_2}C_{\mathbb{F}}$ and the above value in \eqref{Before solving Steen} we get the final result \eqref{I_F(1,x)}.
\end{proof}


\begin{proof}[Corollary \rm{\ref{Case z=1 Q}}][]
When $\mathbb{F} = \mathbb{Q}$, we have $D = 1, r_1 = 1, r_2 = 0, C_\mathbb{Q} = -\frac{1}{2}, H_\mathbb{Q} = 1$ and $\zeta_\mathbb{Q}(2) = \zeta(2) = \frac{\pi^2}{6}$. Using these values and \eqref{Bessel fn formula} in \eqref{IFx}, we get
\begin{align*}
\mathcal{I}_{\mathbb{Q}, 1}(x) &= 4\sqrt{x}\sum_{n=1}^{\infty}\frac{\sigma(n)}{\sqrt{n}}K_{\frac{1}{2}}(2\pi nx) - \frac{\log(x)}{2} + \frac{\pi x}{6} \\
&= 2\sum_{n=1}^{\infty}\frac{\sigma(n)}{n}e^{-2\pi nx} - \frac{\log(x)}{2} + \frac{\pi x}{6},
\end{align*}
where in the last step, we have used that $K_{\frac{1}{2}}(w) = \sqrt{\frac{\pi}{2w}}e^{-w}$. Employing the above equation in Theorem \ref{Case z=1 F}, we have the desired result.
\end{proof}

\begin{proof}[Theorem \rm{\ref{Dedekind at 2 thm}}][]
The proof of this result immediately follows from Theorem \ref{Case z=1 F} by defining
\begin{align*}
S_\mathbb{F}(x) = \frac{2^{r_2}\pi^{r_1+r_2}}{\sqrt{D}C_\mathbb{F}}\sum_{n=1}^{\infty}\frac{\sigma_{\mathbb{F}, 1}(n)}{n}V\left(\left(\frac{(2\pi)^{d} n x}{D}\right)\bigg|\left(0\right)_{r_1 + r_2}, (1)_{r_2}\right),
\end{align*}
and rearranging the terms in \eqref{I_F(1,x)}.
\end{proof}

\begin{proof}[Theorem \rm{\ref{Dedekind zeta at 2m+1}}][]
We substitute $z=2m+1$ in Theorem \ref{Main thm 1} and use functional equation \eqref{Dedekind_zeta_functional_equation} of the Dedekind zeta function to have
\begin{align*}
&\left(\frac{\pi^d}{D}\right)^{-m-\frac{1}{2}}\!\!2x^{-m}\sum_{n=1}^{\infty}\sigma_{\mathbb{F}, -2m-1}(n)V_{\mathbb{F}, 2m+1}\left(\left(\frac{\pi^d n x}{D}\right)^2\right) + 2^d\pi^{r_2}C_{\mathbb{F}}\Omega_{\mathbb{F}}(2m+1)\left[x^{m} - x^{-m}\right] \\
&= \left(\frac{\pi^d}{D}\right)^{m+\frac{1}{2}}\!\!2x^{-m-1}\sum_{n=1}^{\infty}\sigma_{\mathbb{F}, 2m+1}(n)V_{\mathbb{F}, -2m-1}\left(\left(\frac{\pi^d n}{Dx}\right)^2\right) + 2^d\pi^{r_2}C_{\mathbb{F}}\Omega_{\mathbb{F}}(2m+2)\left[x^{m+1} - x^{-m-1}\right].
\end{align*}
We now use the relations \eqref{Gen sigma fn} and \eqref{V_z and V_-z} on the right hand side of the last equation to have
\begin{align*}
&\left(\frac{\pi^d}{D}\right)^{-m-\frac{1}{2}}\!\!2x^{-m}\sum_{n=1}^{\infty}\sigma_{\mathbb{F}, -2m-1}(n)V_{\mathbb{F}, 2m+1}\left(\left(\frac{\pi^d n x}{D}\right)^2\right) + 2^d\pi^{r_2}C_{\mathbb{F}}\Omega_{\mathbb{F}}(2m+1)\left[x^{m} - x^{-m}\right] \\
&= \left(\frac{\pi^d}{D}\right)^{-m-\frac{1}{2}}\!\!2x^m\sum_{n=1}^{\infty}\sigma_{\mathbb{F}, -2m-1}(n)V_{\mathbb{F}, 2m+1}\left(\left(\frac{\pi^d n }{Dx}\right)^2\right) + 2^d\pi^{r_2}C_{\mathbb{F}}\Omega_{\mathbb{F}}(2m+2)\left[x^{m+1} - x^{-m-1}\right].
\end{align*}
Therefore, we arrive at
\begin{align*}
&\Omega_{\mathbb{F}}(2m+1)\left[x^{m} - x^{-m}\right] = K_{\mathbb{F}, m}(x) - K_{\mathbb{F}, m}\left(\frac{1}{x}\right) + \Omega_{\mathbb{F}}(2m+2)\left[x^{m+1} - x^{-m-1}\right],
\end{align*}
where
\begin{align*}
K_{\mathbb{F}, m}(x) = \left(\frac{D}{\pi^d}\right)^{m+\frac{1}{2}}\frac{x^m}{2^{d-1}\pi^{r_2}C_{\mathbb{F}}}\sum_{n=1}^{\infty}\sigma_{\mathbb{F}, -2m-1}(n)V_{\mathbb{F}, 2m+1}\left(\left(\frac{\pi^d n}{Dx}\right)^2\right).
\end{align*}
Hence, using the value \eqref{Omega_F of fnal eqn} of $\Omega_\mathbb{F}(z)$  in the above equation, we get
\begin{align}
\zeta_{\mathbb{F}}(2m+1)  \left[x^{m} - x^{-m}\right] & =  \left( \frac{4^{r_2} \pi^d}{D}\right)^{m+\frac{1}{2}} \frac{1}{\Gamma^{r_1}\left( m+\frac{1}{2} \right) {(2m)!}^{r_2}}
\left[K_{\mathbb{F}, m}(x) - K_{\mathbb{F}, m}\left(\frac{1}{x}\right)\right] \notag \\
+ \sqrt{\frac{D}{4^{r_2}\pi^d}}&\frac{(m!)^{r_1} (2m+1)^{r_2} }{ \Gamma^{r_1}\!\left(m+\frac{1}{2}\right)} \zeta_{\mathbb{F}}(2m+2) \left[x^{m+1} - x^{-m-1}\right]. \label{Zeta_F(2m+1)(x -1/x)}
\end{align}
We finally use the duplication formula \eqref{Legendre's Duplication Formula} for the value of $\Gamma^{r_1}\!\left(m+\frac{1}{2}\right)$ in \eqref{Zeta_F(2m+1)(x -1/x)} to obtain
\begin{align*}
&\zeta_{\mathbb{F}}(2m+1)\left[x^{m} - x^{-m}\right] \\
&= \left( \frac{4^{r_2} \pi^d}{D}\right)^{m+\frac{1}{2}} \left( \frac{(m-1)!}{ 2^{1-2m} \sqrt{\pi} (2m-1)!} \right)^{r_1} \left(\frac{1}{(2m)!} \right)^{r_2}\left[K_{\mathbb{F}, m}(x) - K_{\mathbb{F}, m}\left(\frac{1}{x}\right)\right] \notag \\
&+ \sqrt{\frac{D}{4^{r_2}\pi^d}} \left(\frac{m! (m-1)!}{2^{1-2m} \sqrt{\pi} (2m-1)!} \right)^{r_1} (2m+1)^{r_2} \zeta_{\mathbb{F}}(2m+2) \left[x^{m+1} - x^{-m-1}\right].
\end{align*}
This completes the proof of Theorem \ref{Dedekind zeta at 2m+1}.
\end{proof}

For all the number fields listed in the table below, the class number $h_{\mathbb{F}}$ is 1. The values of $\zeta_{\mathbb{F}}(2)$ were computed using \eqref{Dedekind zeta at 2} with $x=2$. All numerical computations were carried out using \emph{Mathematica} software.
\begin{table}[h]
\centering
\renewcommand{\arraystretch}{1.4}
\caption{Values of $\zeta_{\mathbb{F}}(2)$ for different fields} \label{Zeta_2}
\vspace{0.2cm}
\begin{tabular}{|p{1.45cm}|p{2cm}|p{1.6cm}|p{1.8cm}|p{1.2cm}|p{2cm}|}
\hline
\textbf{Field ($\mathbb{F}$)} & \textbf{Regulator} ($R_\mathbb{F}$) & $\#$\textbf{(Roots of unity)} ($w_\mathbb{F}$) & \textbf{$C_\mathbb{F}$} &\textbf{Disc.} (D) & $\zeta_{\mathbb{F}}(2)$ (From \eqref{Dedekind zeta at 2}) \\
\hline
$\mathbb{Q}(\sqrt{2})$ & $\log(1 + \sqrt{2})$ & 2 & $-\frac{\log(1+\sqrt{2})}{2}$ & 8 & 1.43497 \\
\hline
$\mathbb{Q}(\sqrt{5})$ & $\log\left(\frac{1 + \sqrt{5}}{2}\right)$ & 2 & $-\frac{\log\left(\frac{1 + \sqrt{5}}{2}\right)}{2}$ & 5 & 1.16167 \\
\hline
$\mathbb{Q}(\sqrt{3})$ & $\log(2 + \sqrt{3})$ & 2 & $-\frac{\log(2 + \sqrt{3})}{2}$ & 12 & 1.5622 \\
\hline
$\mathbb{Q}(i)$  & 1 & 4 & $-\frac{1}{4}$ & $-4$ & 1.5067 \\
\hline
$\mathbb{Q}(\sqrt{-3})$ & 1 & 6 & $-\frac{1}{6}$ & $-3$ & 1.28519 \\
\hline
\end{tabular}
\end{table}

\subsection{Concluding Remarks}
Ramanujan recorded several remarkable identities in his Lost Notebook \cite{Ramanujan}, one of which, appearing on page 253, was later rediscovered by Guinand and is now known as the Ramanujan--Guinand identity \eqref{RG id}. This identity admits the Ramanujan--Koshliakov identity \eqref{Koshliakov Formula} and the transformation formula \eqref{Id z=1 Q} for the logarithm of the Dedekind eta function as special cases. In this paper, we established a number field analogue of the Ramanujan--Guinand identity for the Dedekind zeta function, (see Theorem \ref{Main thm 1}). As applications, we obtained number field analogues of the Ramanujan--Koshliakov identity and the transformation formula for the logarithm of the Dedekind eta function, with the classical identities recovered as the special cases. Our results also provide another illustration of the close relationship between transformation formulas and explicit evaluations of special values of zeta functions. In particular, as an application of our main theorem, we obtained a new formula for $\zeta_{\mathbb{F}}(2)$ for an arbitrary number field $\mathbb{F}$. This complements Zagier's celebrated evaluation, which expresses $\zeta_{\mathbb{F}}(2)$ in terms of special values of a logarithmic integral, whereas our formula takes the form of a rapidly convergent series. Moreover, we derived an identity for $\zeta_{\mathbb{F}}(2m+1), m \in \mathbb{N}$, in terms of similar rapidly convergent series.

\textbf{Acknowledgements}
The authors thank Prof. Atul Dixit and Prof. Rahul Kumar for reading the manuscript and giving their valuable suggestions. The first author acknowledges support from the Prime Minister Research Fellowship (PMRF), Government of India, under Grant No. 2102227. The last author is grateful for the support from the Anusandhan National Research Foundation (ANRF), India, through the ARG-MATRICS Grant (Grant No. ANRF/ARGM/2025/002002/MTR) and the Core Research Grant (Grant No. CRG/2023/002122). The authors also thank IIT Indore for its supportive research environment.

\textbf{Conflict of Interest}

The authors declare that they have no conflict of interest.

\textbf{Data Availability Statement}

No datasets were generated or analyzed during the current study.


\begin{thebibliography}{99}
\bibitem{BCH21}
S.~Banerjee, K.~Chakraborty and A.~Hoque, \emph{An analogue of Wilton’s formula and values of Dedekind zeta functions}, J.
Math. Anal. Appl., {\bf 495} (2021), 124675.

\bibitem{BGK23}
S.~Banerjee, R.~Gupta, and R.~Kumar,  \emph{A note on odd zeta values over any number field and Extended Eisenstein series},   J.  Math.   Anal.  Appl.,   {\bf 531}  (2024),  127883.

\bibitem{BK24}
D.~Banerjee and Khyati, \emph{Character analogues of Cohen-type identities and related Voronoi summation formulas}, Adv. Appl. Math., {\bf 153} (2024), pp.~10260.

\bibitem{BM23}
D.~Banerjee and B.~Maji, \emph{Identities associated to a generalized divisor function and modified Bessel function}, Res. Number Theory, {\bf 9}(28) (2023).

\bibitem{BM} 
D.~R.~Bansal and B.~Maji, \emph{A number field analogue of Ramanujan’s identity for $\zeta(2m+1)$}, J. Math. Anal. Appl., {\bf 550} (2025), 129538.

\bibitem{BM2}
D.~R.~Bansal and B.~Maji, \emph{Number field analogue of Jacobi theta relation and zeros of Dedekind zeta function on $\Re(s)= 1/2$},  (2025), arXiv:2507.18121.

\bibitem{BDS}
B.~C.~Berndt, A. Dixit,  and J. Sohn, \emph{Character analogues of theorems of Ramanujan, Koshliakov and Guinand}, Adv. Appl. Math., {\bf 46} (2011), 54--70.

\bibitem{BLS}
B.~C. Berndt, Y.~Lee, and J.~Sohn, \emph{In: Alladi, K. (ed.) The formulas of Koshliakov and Guinand in Ramanujan’s lost notebook, Surveys in Number Theory, Series:} Developments in Mathematics, vol. 17
(2008), Springer, New York, 21--42.  

\bibitem{CN1963} 
K.~Chandrasekharan and R.~Narasimhan,  \emph {The approximate functional equation for a class of zeta-functions}, Math. Annalen, {\bf 152} (1963), 30--64.

\bibitem{Cohen}
H.~Cohen, \emph{Some formulas of Ramanujan involving Bessel functions}, Publ. Math. Besan\c{c}on Alg\'ebre Th\'eorie Nr., (2010), 59--68.

\bibitem{Dixit} 
A.~Dixit, \emph{Transformation formulas associated with integrals involving the Riemann $\Xi$-function}, Monatsh.
Math., {\bf 164} No. 2 (2011), 133–156.

\bibitem{DKM18} 
A.~Dixit, A.~Kesarwani and V.~H.~Moll, \emph{A generalized modified Bessel function and a higher level analogue of the theta transformation formula (with an appendix by N. M. Temme)}, J. Math. Anal. Appl., {\bf 459} (2018), 385--418.

\bibitem{Ferrar1936}
W.~L.~Ferrar, \emph{Some solutions of the equation $F(t) = F(t^{-1})$}, J. Lond. Math. Soc., {\bf 11} (1936), 99--103.

\bibitem{Guinand}
A.~P. Guinand, \emph{Some rapidly convergent series for the Riemann $\xi$-function}, Quart. J. Math. (Oxford), {\bf 6} (1955), 156--160.



\bibitem{KT} 
S.~Kanemitsu and H.~Tsukada,  \emph{Contributions to the theory of zeta-functions},  The Modular Relation Supremacy, Series on Number Theory and Its Applications, vol.10, World Sci., Singapore, 2015.

\bibitem{Klingen}
H.~Klingen, \emph{\"{U}ber die Werte der Dedekindschen Zeta funktionen}, Math. Ann. \textbf{145} (1962), 265--272.

\bibitem{Koshliakov}
N.~S.~Koshliakov, \emph{On Voronoi’s sum-formula}, Mess. Math, {\bf 58} (1929), 30--32.

\bibitem{Kumar} 
R.~Kumar, \emph{Extensions of Watson’s theorem and the Ramanujan–Guinand formula}, Int. J. Number Theory, {\bf 19} (01) (2023), 199--222.

\bibitem{Luke} 
Y.~L.~Luke, The Special Functions and Their Approximations, vol. 1, UK Edition, Academic Press, Inc., 1969.

\bibitem{OLBC2010}
F.~W.~J.~Olver, D.~W.~Lozier, R.~F.~Boisvert, C.~W.~Clark (Eds.), NIST Handbook of Mathematical Functions, Cambridge University Press, Cambridge, 2010.

\bibitem{Ramanujan}
S.~Ramanujan, \emph{The Lost Notebook and Other Unpublished Papers}, Narosa, New Delhi, 1988.

\bibitem{Siegel}
C.~L.~Siegel, \emph{Berechnung von Zetafunktionen an ganzzahligen Stellen}, Nachr. Akad. Wiss. G\"{o}tt., \textbf{10}  (1969) 87--102.


\bibitem{Steen}
S.~W.~P.~Steen, \emph{Divisor functions: their differential equations and recurrence formulas}, Proc. Lond. Math. Soc. {\bf 2} (1930), 47--80.

\bibitem{Watson}
G.~N.~Watson, \emph{Some self-reciprocal functions}, Quart. J. Math. (Oxford) {\bf 2} (1931), 298--309.

\bibitem{Zagier}
D.~Zagier, \emph{Hyperbolic manifolds and special values of Dedekind zeta-functions}, Invent. Math. {\bf 83} (1986), 285--301.
\end{thebibliography}
\end{document}